\newcommand{\scal}[2]{\langle #1,#2\rangle}
\newcommand{\rr}[1]{\mathbf R^{#1}}
\newcommand{\nm}[2]{\Vert #1\Vert _{#2}}
\newcommand{\op}{\operatorname{Op}}
\newcommand{\sets}[2]{\{ \, #1\, ;\, #2\, \} }
\newcommand{\ep}{\varepsilon}
\newcommand{\fy}{\varphi}
\newcommand{\cdo}{\, \cdot \, }
\newcommand{\wpr}{{\text{\footnotesize $\#$}}}
\newcommand{\rank}{\operatorname{rank}}
\newcommand{\eabs}[1]{\langle #1\rangle}     
\newcommand{\vrum}{\vspace{0.2cm}}
\newcommand{\mascB}{\mathscr B}
\newcommand{\mascC}{\mathscr C}
\newcommand{\mascH}{\mathscr H}
\newcommand{\mascI}{\mathscr I}
\newcommand{\mascL}{\mathscr L}
\newcommand{\mascS}{\mathscr S}
\newcommand{\maclB}{\mathcal B}
\newcommand{\maclK}{\mathcal K}
\newcommand{\maclM}{\mathcal M}
\newcommand{\maclS}{\mathcal S}
\numberwithin{equation}{section}          
\newtheorem{thm}{Theorem}
\numberwithin{thm}{section}
\newtheorem{prop}[thm]{Proposition}
\newtheorem{lemma}[thm]{Lemma}
\newcommand{\rubrik}{}
\theoremstyle{definition}
\newtheorem{defn}[thm]{Definition}
\theoremstyle{remark}
\newtheorem{rem}[thm]{Remark}
\title{Decompositions of Gelfand-Shilov kernels into kernels of similar class}
\author{Joachim Toft}
\address{Department of Computer science, Physics and Mathematics,
Linn{\ae}us University, V{\"a}xj{\"o}, Sweden}
\email{joachim.toft@lnu.se}
\author{Andrei Khrennikov}
\address{Department of Computer science, Physics and Mathematics,
Linn{\ae}us University, V{\"a}xj{\"o}, Sweden}
\email{andrei.khrennikov@lnu.se}
\author{B{\"o}rje Nilsson}
\address{Department of Computer science, Physics and Mathematics,
Linn{\ae}us University, V{\"a}xj{\"o}, Sweden}
\email{borje.nilsson@lnu.se}
\author{Sven Nordebo}
\address{Department of Computer science, Physics and Mathematics,
Linn{\ae}us University, V{\"a}xj{\"o}, Sweden}
\email{sven.nordebo@lnu.se}
\keywords{matrices, Hermite functions, kernel theorems, Schatten-von
Neumann operators, singular values}
\subjclass{}
\begin{document}

\begin{abstract}
We prove that any linear operator with kernel in a Gelfand-Shilov space is a composition
of two operators with kernels in the same Gelfand-Shilov space. We also give links on
numerical approximations for such compositions. We apply these composition rules
to establish Schatten-von Neumann properties for such operators.
\end{abstract}

\maketitle

\par

\section{Introduction}\label{sec0}

\par

In this paper we investigate possibilities to decompose linear operators
into operators in the same class. It is obvious that for any topological vector space
$\maclB$, the set $\maclM = \mascL (\maclB )$ of linear and continuous
operators on $\maclB$ is a \emph{decomposition algebra}. That is, any operator
$T$ in $\maclM$ is a composition of two operators in $T_1,T_2\in \maclM$,
since we may choose $T_1$ as the identity operator, and $T_2=T$.
If in addition $\maclB$ is a Hilbert space, then it follows from spectral decomposition
that the set of compact operators on $\maclB$ is a decomposition algebra,
where the decomposition property are obtained by
straight-forward applications of the spectral theorem.

\par

An interesting subclass of linear and continuous operators on $L^2$ concerns
the set of all linear operators whose kernels belong to the Schwartz space. There
are several proofs of the fact that this operator class is a decomposition algebra
(cf. e.g. \cite{Beals, Kl, Sad, Vo} and the references therein).

\par

We remark that there are operator algebras which are not decomposition
algebras. For example, if $\maclB$  is an infinite-dimensional Hilbert space
and $0<p<\infty$, then the set of all Schatten-von Neumann operators of order
$p$ is not a decomposition algebra.

\par

In this paper we consider the case when $\maclM$ is the set of all linear operators
with distribution kernels in the Schwartz class, or in a Gelfand-Shilov
space. We note that these operator classes are small, because the restrictions
on corresponding kernels are strong. For example, it is obvious that
the identity operator does not belong to any of these operator classes.

\par

We prove that any such $\maclM$ is a decomposition algebra. Furthermore,
in the end of the paper we apply the result and prove that any operator
in $\maclM$ as a map between appropriate Banach spaces, or quasi-Banach spaces,
belongs to every Schatten-von Neumann class between these spaces.

\par

\section{Preliminaries}\label{sec1}

\par

In this section we recall some facts on Gelfand-Shilov spaces and
pseudo-differential operators.

\par

We start by defining Gelfand-Shilov spaces and recalling some basic facts.

\par

Let $0<h,s\in \mathbf R$ be fixed. Then we let $\mathcal S_{s,h}(\rr d)$
be the set of all $f\in C^\infty (\rr d)$ such that
\begin{equation*}
\nm f{\mathcal S_{s,h}}\equiv \sup \frac {|x^\beta \partial ^\alpha
f(x)|}{h^{|\alpha | + |\beta |}(\alpha !\, \beta !)^s}
\end{equation*}
is finite. Here the supremum should be taken over all $\alpha ,\beta \in
\mathbf N^d$ and $x\in \rr d$.

\par

Obviously $\mathcal S_{s,h}\subseteq
\mathscr S$ is a Banach space which increases with $h$ and $s$.
Furthermore, if $s\ge 1/2$ and $h$ is sufficiently large, then $\mathcal
S_{s,h}$ contains all finite linear combinations of Hermite functions.
Since such linear combinations are dense in $\mathscr S$, it follows
that the dual $\mathcal S_{s,h}'(\rr d)$ of $\mathcal S_{s,h}(\rr d)$ is
a Banach space which contains $\mathscr S'(\rr d)$.

\par

The \emph{Gelfand-Shilov spaces} $\mathcal S_s(\rr d)$ and
$\Sigma _s(\rr d)$ are the inductive and projective limits respectively
of $\mathcal S_{s,h}(\rr d)$. This means that
\begin{equation}\label{GSspacecond1}
\mathcal S_s(\rr d) = \bigcup _{h>0}\mathcal S_{s,h}(\rr d)
\quad \text{and}\quad \Sigma _s(\rr d) =\bigcap _{h>0}\mathcal S_{s,h}(\rr d),
\end{equation}
$\mathcal S_s(\rr d)$ is equipped with the strongest topology such
that each inclusion map from $\mathcal S_{s,h}(\rr d)$ to $\mathcal S_s(\rr d)$
is continuous, and that $\Sigma _s(\rr d)$ is a Fr{\'e}chet space with semi norms
$\nm \cdo{\mathcal S_{s,h}}$, $h>0$.

\par

We remark that $\mathcal S_s(\rr d)$ equals $\{ 0\}$ if and only
if $s<1/2$, and that $\Sigma _s(\rr d)$ equals $\{ 0 \}$ if and only
if $s\le 1/2$. For each $\ep >0$ and $s\ge 1/2$, we have
$$
\Sigma _s (\rr d)\hookrightarrow \mathcal S_s(\rr d)\hookrightarrow
\Sigma _{s+\ep}(\rr d).
$$
Here, if $A$ and $B$ are topological spaces, then $A\hookrightarrow B$
means that $A$ is continuously embedded in $B$.
On the other hand, in \cite{pil} there is an alternative elegant definition of
$\Sigma _{s_1}(\rr d)$ and $\mathcal S _{s_2}(\rr d)$ such that these spaces
agrees with the definitions above when $s_1>1/2$ and $s_2\ge 1/2$, but
$\Sigma _{1/2}(\rr d)$ is non-trivial and contained in $\mathcal S_{1/2}(\rr d)$.

\par

From now on we assume that $s>1/2$ when considering $\Sigma _s(\rr d)$.

\medspace

The \emph{Gelfand-Shilov distribution spaces} $\mathcal S_s'(\rr d)$
and $\Sigma _s'(\rr d)$ are the projective and inductive limit
respectively of $\mathcal S_{s,h}'(\rr d)$.  This means that
\begin{equation}\tag*{(\ref{GSspacecond1})$'$}
\mathcal S_s'(\rr d) = \bigcap _{h>0}\mathcal S_{s,h}'(\rr d)\quad
\text{and}\quad \Sigma _s'(\rr d) =\bigcup _{h>0} \mathcal S_{s,h}'(\rr d).
\end{equation}
We remark that already in \cite{GS} it is proved that $\mathcal S_s'(\rr d)$
is the dual of $\mathcal S_s(\rr d)$, and if $s>1/2$, then $\Sigma _s'(\rr d)$
is the dual of $\Sigma _s(\rr d)$ (also in topological sense).

\par

The Gelfand-Shilov spaces are invariant under several basic transformations.
For example they are invariant under translations, dilations, tensor products
and to some extent under any Fourier transformation.

\par

From now on we let $\mathscr F$ be the Fourier transform which takes the form
$$
(\mathscr Ff)(\xi )= \widehat f(\xi ) \equiv (2\pi )^{-d/2}\int _{\rr
{d}} f(x)e^{-i\scal  x\xi }\, dx
$$
when $f\in L^1(\rr d)$. Here $\scal \cdo \cdo$ denotes the usual scalar product
on $\rr d$. The map $\mathscr F$ extends 
uniquely to homeomorphisms on $\mathscr S'(\rr d)$, $\mathcal S_s'(\rr d)$
and $\Sigma _s'(\rr d)$, and restricts to 
homeomorphisms on $\mathscr S(\rr d)$, $\mathcal S_s(\rr d)$ and $\Sigma _s(\rr d)$, 
and to a unitary operator on $L^2(\rr d)$.

\par

The following lemma shows that elements in Gelfand-Shilov spaces can
be characterized by estimates of the form
\begin{equation}\label{GSexpcond}
|f(x)|\le Ce^{-\ep |x|^{1/s}}\quad \text{and}\quad |\widehat f (\xi )|\le Ce^{-\ep |\xi |^{1/s}} .
\end{equation}
The proof is omitted, since the result can be found in e.{\,}g. \cite{GS, ChuChuKim}.

\par

\begin{lemma}\label{GSFourierest}
Let $f\in \mathcal S'_{1/2}(\rr d)$. Then the following is true:
\begin{enumerate}
\item if $s\ge 1/2$, then $f\in \mathcal S_s(\rr d)$, if and only if there
are constants $\ep >0$ and $C>0$
such that \eqref{GSexpcond} holds;

\vrum

\item  if $s>1/2$, then $f\in \Sigma _s(\rr d)$, if and only if for each $\ep >0$,
there is a constant $C$ such that \eqref{GSexpcond} holds.
\end{enumerate}
\end{lemma}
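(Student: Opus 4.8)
The plan is to prove both equivalences by decoupling the combined Gelfand-Shilov seminorm into a ``decay half'' that controls $f$ and a ``smoothness half'' that controls $\widehat f$, and then to recombine. The starting observation is that, by the very definition of $\nm f{\mathcal S_{s,h}}$, membership $f\in\mathcal S_{s,h}(\rr d)$ forces in particular the pure moment bounds $\sup_x|x^\beta f(x)|\le C\,h^{|\beta|}(\beta !)^s$ (take $\alpha=0$), while the Fourier invariance recalled above, in its quantitative form that $\mathscr F$ maps $\mathcal S_{s,h}$ continuously into some $\mathcal S_{s,h'}$, transports the same information to $\widehat f$ and yields $\sup_\xi|\xi^\alpha\widehat f(\xi)|\le C\,h'^{|\alpha|}(\alpha !)^s$. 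Conversely, since $\widehat{\partial^\alpha f}(\xi)=(\im\xi)^\alpha\widehat f(\xi)$, Fourier inversion turns bounds on $\xi^\alpha\widehat f$ into bounds on $\partial^\alpha f$; note that this is where the estimate on $\widehat f$ upgrades the a priori regularity, so that assuming only $f\in\mathcal S'_{1/2}$ suffices. Thus the statement reduces to a single scalar fact: a moment condition $\sup_x|x^\beta g(x)|\le C\,h^{|\beta|}(\beta !)^s$ and an exponential bound $|g(x)|\le C'e^{-\ep|x|^{1/s}}$ say the same thing, applied once to $g=f$ and once to $g=\widehat f$.

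First I would carry out this scalar equivalence by optimizing over the multi-index. If $|g(x)|\le C'e^{-\ep|x|^{1/s}}$, then $|x^\beta g(x)|\le C'\sup_{t\ge0}t^{|\beta|}e^{-\ep t^{1/s}}$, and maximizing $t\mapsto t^{N}e^{-\ep t^{1/s}}$ at $t=(sN/\ep)^s$ gives, via Stirling, a bound $C\,(s/\ep)^{s|\beta|}(\beta !)^s$ of the required form with $h$ proportional to $(s/\ep)^s$. For the reverse implication, from $|x^\beta g(x)|\le C\,h^{N}(N!)^s$ one writes $|g(x)|\le C\,h^{N}(N!)^s|x|^{-N}$ for every total degree $N$ and minimizes the right-hand side in $N$; the minimum sits near $N=(|x|/h)^{1/s}$ and reproduces $|g(x)|\le C'e^{-\ep|x|^{1/s}}$ with $\ep$ proportional to $h^{-1/s}$. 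The one routine point to watch is that passing from $\xi^\alpha\widehat f$ back to $\partial^\alpha f$ uses $\|\partial^\alpha f\|_\infty\le(2\pi)^{-d/2}\|\xi^\alpha\widehat f\|_{L^1}$, so I first absorb $d+1$ extra powers of $\eabs\xi$ into the supremum and integrate $\eabs\xi^{-(d+1)}$; this only enlarges the constants and the parameter $h$.

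The step I expect to be the main obstacle is the recombination: the two separate families of bounds, decay of $f$ and decay of $\widehat f$ (equivalently smoothness of $f$), must be merged into the genuinely joint estimate $\sup_x|x^\beta\partial^\alpha f(x)|\le C\,h^{|\alpha|+|\beta|}(\alpha !\,\beta !)^s$ that defines $\mathcal S_{s,h}$. The natural route is to compute $\mathscr F(x^\beta\partial^\alpha f)$, which by Leibniz equals a constant times $\partial^\beta\big((\im\xi)^\alpha\widehat f(\xi)\big)=\sum_{\gamma\le\min(\alpha,\beta)}c_{\alpha\beta\gamma}\,\xi^{\alpha-\gamma}\partial^{\beta-\gamma}\widehat f(\xi)$, and then to estimate each term using both the moment bounds and the derivative bounds on $\widehat f$; the combinatorial factors $\binom\beta\gamma\,\alpha !/(\alpha-\gamma)!$ are dominated by $2^{|\beta|}\alpha !$, which is harmless against the $(\alpha !\,\beta !)^s$ budget for $s\ge1/2$. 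Carrying this bookkeeping through, and applying Fourier inversion once more to return to $x$-space, yields the joint bound. Finally, the only difference between parts (1) and (2) is the order of quantifiers on $\ep$ and $h$: the $\mathcal S_s$ case corresponds to ``there exists $h$'' and hence ``there exists $\ep$'', while the $\Sigma_s$ case corresponds to ``for all $h$'' and hence ``for all $\ep$'', and since every comparison above is monotone in these parameters the two formulations match automatically.
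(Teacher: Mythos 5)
Your forward direction is fine, and so is the first half of your converse: decay of $\widehat f$ plus Fourier inversion does give $\sup _x|\partial ^\alpha f(x)|\le Ch^{|\alpha |}(\alpha !)^s$, and decay of $f$ gives the moment bounds $\sup _x|x^\beta f(x)|\le Ch^{|\beta |}(\beta !)^s$. The genuine gap is exactly at the step you yourself flagged as the main obstacle, and the fix you propose does not close it. Expanding $\mathscr F(x^\beta \partial ^\alpha f)$ by Leibniz produces the terms $\xi ^{\alpha -\gamma}\partial ^{\beta -\gamma}\widehat f(\xi )$, and each of these is a \emph{joint} moment--derivative quantity for $\widehat f$. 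But what you have for $\widehat f$ are only the two \emph{separate} families of bounds: $\sup _\xi |\xi ^\delta \widehat f(\xi )|\le Ch^{|\delta |}(\delta !)^s$ (from the assumed decay of $\widehat f$) and $\sup _\xi |\partial ^\delta \widehat f(\xi )|\le Ch^{|\delta |}(\delta !)^s$ (from the moments of $f$). Knowing these two suprema separately gives no control on $\sup _\xi |\xi ^{\delta _1}\partial ^{\delta _2}\widehat f(\xi )|$, since a bounded derivative $\partial ^{\delta _2}\widehat f$ need not decay at all just because $\widehat f$ does. Bounding those mixed terms is therefore precisely the statement you are trying to prove, with $f$ replaced by $\widehat f$: the argument is circular, and no bookkeeping of the binomial factors can repair this.

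What is missing is a nontrivial analytic input: the implication ``separate bounds $\Rightarrow$ joint bounds'', i.e.\ that a function satisfying all moment bounds $|x^\beta f(x)|\le Ch^{|\beta |}(\beta !)^s$ together with all uniform derivative bounds $\|\partial ^\alpha f\|_{L^\infty}\le Ch^{|\alpha |}(\alpha !)^s$ automatically satisfies the joint estimates defining $\mathcal S_{s,h'}$ for some $h'$. This is a theorem in its own right (due to Kashpirovskii; it is also the backbone of the proof in \cite{ChuChuKim}, which together with \cite{GS} is where the paper points instead of giving a proof). Its known proofs use Landau--Kolmogorov type interpolation inequalities for intermediate derivatives applied on cubes away from the origin: roughly, on a unit cube $Q$ centered at $x$ one has $\|\partial ^\alpha f\|_{L^\infty (Q)}\le C^{|\alpha |}\bigl (\|f\|_{L^\infty (Q')}^{1/2}\|\partial ^{2\alpha} f\|_{L^\infty (Q')}^{1/2}+\|f\|_{L^\infty (Q')}\bigr )$, and the geometric mean of the decay factor $e^{-\ep |x|^{1/s}}$ and the derivative bound $h^{2|\alpha |}((2\alpha )!)^s$ yields simultaneously the decay $e^{-\ep |x|^{1/s}/2}$ and the factor $(4^sh)^{|\alpha |}(\alpha !)^{2s}$ (via $(2\alpha )!\le 4^{|\alpha |}(\alpha !)^2$), which after the moment-optimization you already carried out gives the joint bound. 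Without an ingredient of this kind (or an equivalent Phragm{\'e}n--Lindel{\"o}f argument), the two halves cannot be merged, so your proof as it stands is incomplete.
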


\par

Gelfand-Shilov spaces can also easily be characterized by Hermite functions. We recall
that the Hermite function $h_\alpha$ with respect to the multi-index $\alpha \in \mathbf N^d$ is defined by
$$
h_\alpha (x) = \pi ^{-d/4}(-1)^{|\alpha |}(2^{|\alpha |}\alpha
!)^{-1/2}e^{|x|^2/2}(\partial ^\alpha e^{-|x|^2}).
$$
The set $(h_\alpha )_{\alpha \in \mathbf N^d}$ is an orthonormal basis for $L^2(\rr d)$. In particular,
\begin{equation}\label{hermexp}
f=\sum _\alpha c_\alpha h_\alpha ,\quad c_\alpha =(f,h_\alpha )_{L^2(\rr d)},
\end{equation}
and
$$
\nm f{L^2}=\nm {\{ c_\alpha \} _\alpha }{l^2}<\infty ,
$$
when $f\in L^2(\rr d)$. Here and in what follows, $(\cdo ,\cdo )_{L^2(\rr d)}$ denotes any
continuous extension of the $L^2$ form on $\mathcal S_{1/2}(\rr d)$.

\par

Let $p\in [1,\infty ]$ be fixed. Then it is well-known that $f$ here belongs to
$\mathscr S(\rr d)$, if and only if
\begin{equation}\label{Shermexp}
\nm {\{ c_\alpha \eabs \alpha ^t\} _\alpha }{l^p}<\infty 
\end{equation}
for every $t\ge 0$. Here we let $\eabs x=(1+|x|^2)^{1/2}$
when $x\in \rr d$. Furthermore, for every $f\in \mathscr S'(\rr d)$, the expansion
\eqref{hermexp}
still holds, where the sum converges in $\mathscr S'$, and \eqref{Shermexp} holds for some
choice of $t\in \mathbf R$, which depends on $f$.

\par

The following proposition, which can be found in e.{\,}g. \cite{GrPiRo}, shows that similar
conclusion for Gelfand-Shilov spaces holds, after the estimate \eqref{Shermexp} is replaced by
\begin{equation}\label{GShermexp}
\nm { \{ c_\alpha e^{t |\alpha |^{1/2s}} \} _\alpha }{l^p}<\infty .
\end{equation}
We refer to the proof of formula (2.12) in \cite{GrPiRo} for its proof.

\par

\begin{prop}\label{stftGS2}
Let $p\in [1,\infty ]$, $f\in \mathcal S_{1/2}'\rr d)$, $s_1\ge 1/2$, $s_2>1/2$ and let
$c_\alpha$ be as in \eqref{hermexp}. Then the following is true:
\begin{enumerate}
\item $f\in \mathscr S(\rr d)$, if and only if \eqref{Shermexp} holds for every $t>0$.
Furthermore, \eqref{hermexp} holds where the sum converges in $\mathscr S$;

\vrum

\item $f\in \mathcal S_{s_1}(\rr d)$, if and only if \eqref{GShermexp} holds for some $t>0$.
Furthermore, \eqref{hermexp} holds where the sum converges in $\mathcal S_{s_1}$;

\vrum

\item  $f\in \Sigma _{s_2}(\rr d)$, if and only if \eqref{GShermexp} holds for every $t>0$.
Furthermore, \eqref{hermexp} holds where the sum converges in $\Sigma _{s_2}$.

\vrum

\item $f\in \mathscr S'(\rr d)$, if and only if \eqref{Shermexp} holds for some $t<0$.
Furthermore, \eqref{hermexp} holds where the sum converges in $\mathscr S'$;

\vrum

\item $f\in \mathcal S_{s_1}'(\rr d)$, if and only if \eqref{GShermexp} holds for every
$t<0$. Furthermore, \eqref{hermexp} holds where the sum converges in $\mathcal S_{s_1}'$;

\vrum

\item  $f\in \Sigma _{s_2}'(\rr d)$, if and only if \eqref{GShermexp} holds for some $t<0$.
Furthermore, \eqref{hermexp} holds where the sum converges in $\Sigma _{s_2}'$;
\end{enumerate}
\end{prop}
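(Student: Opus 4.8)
The plan is to characterize each space by its Hermite coefficients, and the key observation is that the Hermite expansion converts every analytic estimate into a weighted sequence-space condition. I would prove all six equivalences in parallel, since they differ only in whether the relevant weight condition is required for ``some'' or ``every'' value of a parameter, and in the direction of the inequalities. The central analytic input is the known growth estimate for the Hermite functions themselves: there are constants $C,\ep_0>0$ such that, for a suitable range of $t$, one has pointwise bounds on $h_\alpha$ that let one pass between the $L^p$ condition \eqref{GShermexp} on the coefficients and the two-sided exponential decay \eqref{GSexpcond} from Lemma \ref{GSFourierest}.

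First I would treat the test-function cases (2) and (3). For the forward direction, assume $f\in\mathcal S_{s_1}(\rr d)$; then $c_\alpha=(f,h_\alpha)_{L^2}$, and I would estimate $|c_\alpha|$ by inserting the characterization \eqref{GSexpcond} for $f$ together with the companion decay estimate for $h_\alpha$, so that the integral defining $c_\alpha$ decays like $e^{-t|\alpha|^{1/2s_1}}$. For the converse, assume \eqref{GShermexp} holds for some (resp. every) $t>0$; I would show the series $\sum_\alpha c_\alpha h_\alpha$ converges in $\mathcal S_{s_1,h}$ for an appropriate $h$ by bounding each seminorm $\nm{\cdo}{\mathcal S_{s,h}}$ of the tail, again using the Hermite growth bound to absorb the polynomial-times-exponential factors against the super-exponential smallness of $c_\alpha$. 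The Schwartz case (1) is the $s_1\to\infty$ degeneration of this argument, in which the exponential weight $e^{t|\alpha|^{1/2s}}$ is replaced by the polynomial weight $\eabs\alpha^t$ and \eqref{Shermexp} appears in place of \eqref{GShermexp}.

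The distribution cases (4), (5), (6) I would obtain by duality. Since $\mathcal S_{s_1}'$, $\mathscr S'$, and $\Sigma_{s_2}'$ are the duals of the corresponding test-function spaces, and since $(h_\alpha)_\alpha$ is an orthonormal basis of $L^2$ that lies in every $\mathcal S_{s,h}$ with $s\ge1/2$ and $h$ large, the coefficients $c_\alpha=(f,h_\alpha)_{L^2}$ are well defined for $f$ in the distribution space, and the pairing of $f$ with a test function $g=\sum_\beta d_\beta h_\beta$ becomes the bilinear form $\sum_\alpha c_\alpha d_\alpha$ on the coefficient sequences. The weight condition on $\{c_\alpha\}$ characterizing membership in the distribution space is then precisely the one dual to the condition characterizing the predual, which flips the sign of $t$ (giving $t<0$) and interchanges ``some'' and ``every'' because the inductive and projective limits swap under duality. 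Convergence of \eqref{hermexp} in the relevant weak topology follows from the same pairing.

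The main obstacle will be making the Hermite estimate quantitative enough to track the exponent $1/2s$ correctly. The subtlety is that the natural decay of $h_\alpha$ is governed by its eigenvalue $2|\alpha|+d$ under the harmonic oscillator, so the relevant scale in the variable is $|\alpha|^{1/2}$ rather than $|\alpha|$; combined with the $s$-dependence of the exponential decay in \eqref{GSexpcond}, this is exactly what produces the exponent $|\alpha|^{1/2s}$ in \eqref{GShermexp}, and getting the two powers to match requires care. Rather than redo this delicate estimate, I would invoke the computation already carried out for formula (2.12) in \cite{GrPiRo}, and simply indicate how the $p$-dependence and the six cases follow from it by the duality and limiting arguments described above.
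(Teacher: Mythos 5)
The first thing to note is that the paper itself contains no proof of this proposition: it is quoted from \cite{GrPiRo}, and the reader is referred to the proof of formula (2.12) there. Since your final paragraph ends by invoking exactly that computation, your proposal ultimately rests on the same citation as the paper does, and to that extent it is consistent with it.

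However, the self-contained argument you sketch before falling back on the citation has a genuine flaw in the forward direction of (2) and (3). You propose to bound $|c_\alpha| = |(f,h_\alpha)_{L^2}|$ by ``inserting'' the pointwise bounds \eqref{GSexpcond} for $f$ together with a companion pointwise decay estimate for $h_\alpha$ into the integral defining $c_\alpha$. Absolute-value estimates of this kind cannot produce exponential decay in $\alpha$: on the bulk region $|x|\le (2|\alpha |+d)^{1/2}$ the Hermite function is oscillatory with amplitude only polynomially small in $|\alpha |$, so for fixed $f$ the quantity $\int |f(x)|\, |h_\alpha (x)|\, dx$ decays at best polynomially. Concretely, take $f=h_0$, a Gaussian, which lies in every $\mathcal S_s(\rr d)$: in dimension one $\int |h_0|\, |h_\alpha |\, dx \sim |\alpha |^{-1/4}$, while the true coefficients are $c_\alpha = \delta _{0,\alpha}$. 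The exponential smallness of $c_\alpha$ comes entirely from cancellation, which your estimate discards. The standard repair --- and what is actually done in the proof of (2.12) in \cite{GrPiRo} --- is to route the estimate through the harmonic oscillator $H=|x|^2-\Delta$: since $Hh_\alpha = (2|\alpha |+d)h_\alpha$ and $H$ is symmetric, one writes $c_\alpha = (2|\alpha |+d)^{-N}(H^Nf,h_\alpha )_{L^2}$, bounds $\nm {H^Nf}{L^2}\le C^{N+1}(N!)^{2s}$ for $f\in \mathcal S_s(\rr d)$ (this is where the defining seminorms of $\mathcal S_{s,h}$ enter), and optimizes over $N$; this optimization is precisely what produces the exponent $|\alpha |^{1/2s}$ in \eqref{GShermexp}. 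Your closing remark correctly identifies that the scale $|\alpha |^{1/2}$ comes from the eigenvalue $2|\alpha |+d$, but the mechanism is self-adjointness of $H$ and orthogonality, not pointwise decay. The remaining ingredients of your sketch --- the duality treatment of (4)--(6) with the some/every interchange under the swap of inductive and projective limits, and the fact that the value of $p$ is immaterial because a small change in $t$ absorbs any polynomial factor --- are sound.
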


\par

Proposition \ref{stftGS2} is fundamental in the proof of Theorem \ref{GSkernels} which in turn
is a cornerstone in the proof of Theorem \ref{thmSchattGSkernels}. It also give
links on how properties valid for tempered distributions or Schwartz functions
can be carried over to Gelfand-Shilov spaces of functions or distributions by
passing from estimates of the form \eqref{Shermexp}
to estimates of the form \eqref{GShermexp}, and vice versa.

\medspace

Next we recall some properties of pseudo-differential operators. Let
$t\in \mathbf R$ be fixed and let $a\in \mathcal S_{1/2}(\rr {2d})$. Then the
pseudo-differential operator $\op _t(a)$
with symbol $a$ is the linear and continuous operator on $\mathcal S_{1/2}(\rr d)$,
defined by the formula
\begin{equation}\label{e0.5}
(\op _t(a)f)(x)
=
(2\pi  ) ^{-d}\iint a((1-t)x+ty,\xi )f(y)e^{i\scal {x-y}\xi }\,
dyd\xi .
\end{equation}
The definition of $\op _t(a)$ extends to each $a\in \mathcal S_{1/2}'(\rr
{2d})$, and then $\op _t(a)$ is continuous from $\mathcal S_{1/2}(\rr d)$ to
$\mathcal S_{1/2}'(\rr d)$. (Cf. e.{\,}g. \cite {CPRT10}, and to some extent \cite{Ho1}.) 
More precisely, for
any $a\in \mathcal S_{1/2}'(\rr {2d})$, the operator $\op _t(a)$
is defined as the linear and continuous
operator from $\mathcal S_{1/2}(\rr d)$ to $\mathcal S_{1/2}'(\rr d)$ with
distribution kernel given by
\begin{equation}\label{atkernel}
K_{a,t}(x,y)=(\mathscr F_2^{-1}a)((1-t)x+ty,x-y).
\end{equation}
Here $\mathscr F_2F$ is the partial Fourier transform of $F(x,y)\in
\mathcal S_{1/2}'(\rr {2d})$ with respect to the $y$ variable. This
definition makes
sense, since the mappings $\mathscr F_2$ and $F(x,y)\mapsto
F((1-t)x+ty,y-x)$ are homeomorphisms on $\mathcal S_{1/2}'(\rr
{2d})$.

\par

On the other hand, let $T$ be an arbitrary linear and continuous
operator from $\maclS _{1/2}(\rr d)$ to $\maclS _{1/2}'(\rr d)$. Then it follows from
Theorem 2.2 in \cite{LozPerTask} that for some
$K =K_T\in \maclS '_{1/2}(\rr {2d})$ we have
$$
(Tf,g)_{L^2(\rr d)} = (K,g\otimes \overline f )_{L^2(\rr {2d})},
$$
for every $f,g\in \maclS _{1/2}(\rr d)$. Now by letting $a$ be defined by
\eqref{atkernel} after replacing $K_{a,t}$ with $K$ it follows that
$T=\op _t(a)$. Consequently, the map $a\mapsto \op _t(a)$ is bijective from
$\maclS _{1/2}'(\rr {2d})$ to $\mathscr L(\maclS _{1/2}(\rr d),\maclS _{1/2}'(\rr d))$.

\par

If $t=1/2$, then $\op _t(a)$ is equal to the Weyl
quantization $\op ^w(a)$ of $a$. If instead $t=0$, then the standard
(Kohn-Nirenberg) representation $a(x,D)$ is obtained.

\par

In particular, if $a\in \maclS _{1/2}'(\rr {2d})$ and $s,t\in
\mathbf R$, then there is a unique $b\in \maclS _{1/2}'(\rr {2d})$ such that
$\op _s(a)=\op _t(b)$. By straight-forward applications of Fourier's
inversion  formula, it follows that
\begin{equation}\label{pseudorelation}
\op _s (a)=\op _t(b) \quad \Longleftrightarrow \quad b(x,\xi )=e^{i(t-s)\scal
{D_x}{D_\xi}}a(x,\xi ).
\end{equation}
(Cf. Section 18.5 in \cite{Ho1}.) Note here that the right-hand side makes sense,
because $e^{i(t-s)\scal
{D_x }{D_\xi }}$ on the Fourier transform side is a multiplication by
the function $e^{i(t-s)\scal x \xi }$, which is a continuous
operation on $\maclS _{1/2}'(\rr {2d})$, in view of the definitions.

\par

Next let $t\in \mathbf R$ be fixed and let $a,b\in \maclS _{1/2}'(\rr {2d})$. Then the product
$a\wpr _tb$ is defined by the formula
$$
\op _t(a\wpr _tb) = \op _t(a)\circ \op _t(b), 
$$
provided the right-hand side makes sense as a continuous operator from
$\maclS _{1/2}(\rr d)$ to $\maclS _{1/2}'(\rr d)$. We note that the element $a\wpr _t b$ is uniquely defined
and belongs to $c\in \maclS _{1/2}'(\rr {2d})$.

\par

\section{Gelfand-Shilov kernels and pseudo-differential
operators}\label{sec2}

\par

In what follows we use the convention that if $T_0$ is a linear and continuous
operator from $\maclS _{1/2}(\rr {d_1})$ to $\maclS _{1/2}'(\rr {d_2})$,
and $g \in \maclS _{1/2}'(\rr {d_0})$, then $T_0\otimes g$ is the linear
and continuous operator from $\maclS _{1/2}(\rr {d_1})$ to
$\maclS _{1/2}'(\rr {d_2+d_0})$, given by
$$
(T_0\otimes g)\, :\, f \mapsto (T_0f)\otimes g .
$$

\par

In the following definition we recall that an operator $T$ from $\maclS _{1/2}(\rr d)$ to
$\maclS _{1/2}'(\rr d)$ is called \emph{positive semi-definite}, if
$(Tf,f)_{L^2}\ge 0$, for every $f\in \maclS _{1/2}(\rr d)$. Then we write $T\ge 0$.

\par

\begin{defn}\label{defHermdiagform}
Let $d_2\ge d_1$ and let $T$ be a linear operator from $\maclS _{1/2}(\rr {d_1})$ to
$\maclS _{1/2}'(\rr {d_2})$. Then $T$ is said to be a \emph{Hermite diagonal operator} if $T=T_0\otimes g$,
where the Hermite functions are eigenfunctions to $T_0$, and either
$d_2=d_1$ and $g=1$, or $d_2>d_1$ and $g$ is a Hermite function.

\par

Moreover, if $T=T_0\otimes g$ is on Hermite diagonal form and $T_0$ is
positive semi-definite, then
$T$ is said to be a \emph{positive Hermite diagonal operator}.
\end{defn}

\par

The first part of the following result can be found in \cite{Beals, Vo}
(see also \cite{Kl, Sad} and the references therein for an elementary proof).

\par

\begin{thm}\label{GSkernels}
Let $T$ be a linear and continuous operator from $\maclS _{1/2}(\rr {d_1})$
to $\maclS _{1/2}'(\rr {d_2})$ with the kernel $K$, and let $d_0\ge \min (d_1,d_2)$.
Then the following is true:
\begin{enumerate}
\item If $K\in \mathscr S (\rr {d_2+d_1})$, then there
are operators $T_1$ and $T_2$ with
kernels $K_1\in \mathscr S (\rr {d_0+d_1})$ and $K_2\in \mathscr S (\rr {d_2+d_0})$
respectively such that $T=T_2\circ T_1$. Furthermore, at least one
of $T_1$ and $T_2$ can be chosen as positive Hermite diagonal operator;

\vrum

\item If $s\ge 1/2$ and $K\in \maclS _s(\rr {d_2+d_1})$, then there
are operators $T_1$ and $T_2$ with
kernels $K_1\in \maclS _s(\rr {d_0+d_1})$ and $K_2\in \maclS _s(\rr {d_2+d_0})$
respectively such that $T=T_2\circ T_1$. Furthermore, at least one
of $T_1$ and $T_2$ can be chosen as positive Hermite diagonal operator;

\vrum

\item If $s>1/2$ and $K\in \Sigma _s(\rr {d_2+d_1})$, then there
are operators $T_1$ and $T_2$ with
kernels $K_1\in \Sigma _s(\rr {d_0+d_1})$ and $K_2\in \Sigma
_s(\rr {d_2+d_0})$ respectively such that $T=T_2\circ T_1$.  Furthermore,
at least one of $T_1$ and $T_2$ can be chosen as positive Hermite diagonal operator.
\end{enumerate}
\end{thm}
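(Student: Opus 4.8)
The plan is to realize $T$ on the Hermite basis and split the resulting matrix as (diagonal)$\,\times\,$(matrix in the same class). By passing to the adjoint $T^*$, whose kernel $\overline{K(y,x)}$ lies in the same space and which interchanges the roles of $d_1$ and $d_2$, I may assume $d_0\ge d_1$ and arrange the Hermite diagonal factor to be $T_1$; the alternative $d_0\ge d_2$ then yields $T_2$ diagonal. Expanding $K=\sum_{\alpha,\beta}c_{\alpha,\beta}\,h_\alpha\otimes h_\beta$ with $\alpha\in\mathbf N^{d_2}$, $\beta\in\mathbf N^{d_1}$ gives $Th_\beta=\sum_\alpha c_{\alpha,\beta}h_\alpha$, and by Proposition \ref{stftGS2} the hypothesis on $K$ is equivalent to a weighted summability of $(c_{\alpha,\beta})$: super-polynomial in case (1), and of exponential type $e^{t(|\alpha|^{1/2s}+|\beta|^{1/2s})}$ for some (resp. every) $t>0$ in cases (2) and (3), where I use $|(\alpha,\beta)|^{1/2s}\asymp|\alpha|^{1/2s}+|\beta|^{1/2s}$, valid since $1/2s\le1$.

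Next I fix a Hermite function $g=h_\gamma$ on $\mathbf R^{d_0-d_1}$ (or $g=1$ if $d_0=d_1$) and positive numbers $\lambda_\beta>0$, and set $T_1=T_{1,0}\otimes g$ with $T_{1,0}h_\beta=\lambda_\beta h_\beta$, a positive Hermite diagonal operator in the sense of Definition \ref{defHermdiagform}. I then define $T_2$ by $T_2(h_\beta\otimes h_\gamma)=\lambda_\beta^{-1}\sum_\alpha c_{\alpha,\beta}h_\alpha$ and $T_2(h_\beta\otimes h_{\gamma'})=0$ for $\gamma'\ne\gamma$, so that $T_2\circ T_1=T$ holds by construction. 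The Hermite coefficients of $K_1$ are, up to the fixed index $\gamma$, the numbers $\lambda_\beta$, and those of $K_2$ are $\lambda_\beta^{-1}c_{\alpha,\beta}$; hence, again by Proposition \ref{stftGS2}, the whole problem reduces to choosing $\lambda_\beta$ so that both $(\lambda_\beta)$ and $(\lambda_\beta^{-1}c_{\alpha,\beta})$ satisfy the summability characterizing the space. Writing $S_t(\beta)$ for the column norm $(\sum_\alpha|c_{\alpha,\beta}|^p e^{pt(|\alpha|^{1/2s}+|\beta|^{1/2s})})^{1/p}$, the two requirements read: $(\lambda_\beta)$ lies in the (weighted) space, and $(S_t(\beta)/\lambda_\beta)_\beta\in\ell^p$ for the relevant range of $t$.

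The inductive case (2) is then immediate: the hypothesis holds for a fixed $t>0$, and $\lambda_\beta=e^{-(t/2)|\beta|^{1/2s}}$ splits the exponential budget so that both $(\lambda_\beta)$ and $(S_t(\beta)/\lambda_\beta)$ keep decay of order $e^{-(t/4)|\beta|^{1/2s}}$. I expect the projective cases (1) and (3) to be the real obstacle, since there the summability is demanded for \emph{every} $t$: no fixed exponential or polynomial weight can serve, because forcing $(\lambda_\beta)$ into the space for all $t$ enlarges $\lambda_\beta^{-1}$, whereas dominating the columns $S_t$ shrinks it. I would resolve this tension by a shell-wise diagonalization. Each $S_t$ itself lies in the space, so I can choose radii $R_0<R_1<\cdots$ with $\sum_{|\beta|\ge R_n}S_{2n}(\beta)^p\le2^{-n}$ and put $\lambda_\beta=e^{-n|\beta|^{1/2s}}$ (respectively $\langle\beta\rangle^{-n}$ in case (1)) on the shell $R_n\le|\beta|<R_{n+1}$. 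Then $(\lambda_\beta)$ decays faster than every exponential, so $K_1$ lies in the space; and for fixed $t$ and $n\ge t$ the shell contribution to $\sum_\beta(S_t(\beta)/\lambda_\beta)^p$ is at most $\sum_{|\beta|\ge R_n}S_{2n}(\beta)^p\le2^{-n}$, which sums, while the finitely many inner shells are finite sets.

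Feeding these bounds back through the converse direction of Proposition \ref{stftGS2} places $K_1$ and $K_2$ in the required space on $\mathbf R^{d_0+d_1}$ and $\mathbf R^{d_2+d_0}$, with $T_1$ positive Hermite diagonal by construction, and the case $d_0\ge d_2$ follows by applying all of the above to $T^*$. Beyond this bookkeeping, the only points I would check carefully are that fixing the index $\gamma$ of $g$ perturbs the weights only by bounded factors (again via $|(\alpha,\beta,\gamma)|^{1/2s}\asymp|\alpha|^{1/2s}+|\beta|^{1/2s}$), and that $\lambda_\beta$ can be kept strictly positive on columns where $c_{\cdot,\beta}$ vanishes by adding a harmless super-small positive term.
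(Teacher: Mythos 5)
Your proposal is correct and follows essentially the same route as the paper: expand the kernel in Hermite functions, factor $T$ through a positive diagonal operator whose weights are a single exponential split $e^{-(t/2)|\beta|^{1/2s}}$ in the inductive case $\maclS_s$, and shell-wise weights $e^{-n|\beta|^{1/2s}}$ (growing exponent per shell) in the projective cases $\Sigma_s$ and $\mathscr S$, with the dimension gap handled by tensoring with a fixed Hermite function and the case $d_0\ge d_2$ by passing to adjoints. The only difference is bookkeeping: the paper determines its shells $I_j$ by thresholds $\Theta_N$ on the size of the coefficients $a_{\alpha,\beta}$ (an $\ell^\infty$ criterion), whereas you choose radii $R_n$ making the tails of the weighted column norms $S_{2n}$ summably small (an $\ell^p$ criterion) — both implement the same idea and both work.
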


\par

\begin{rem}
An operator with kernel in $\maclS _s(\rr {2d})$ is sometimes called a regularizing
operator with respect to $\maclS _s$, because it extends uniquely to a continuous
map from (the large space) $\maclS _s'(\rr d)$ into (the small space) $\maclS
_s(\rr d)$. Analogously, an operator with kernel in $\Sigma _s(\rr {2d})$
($\mathscr S (\rr {2d})$) is sometimes called a regularizing operator with
respect to $\Sigma _s$ ($\mathscr S$).
\end{rem}

\par

\begin{proof}[Proof of Theorem \ref{GSkernels}]
We only prove (2) and (3). The assertion (1) follows by similar arguments as in the proof of (3).
Furthermore, a proof of the first part of (1) can be found in \cite{Beals,Sad}.

\par

First we assume that $d_0= d_1$, and start to prove (2). Let $h_{d,\alpha}(x)$
be the Hermite function on $\rr d$ of order
$\alpha \in \mathbf N^d$. Then $K$ posses the expansion
\begin{equation}\label{Kexp}
K(x,y) = \sum _{\alpha \in \mathbf N^{d_2}}\sum _{\beta \in \mathbf N^{d_1}}
a_{\alpha ,\beta}h_{d_2,\alpha}(x)h_{d_1,\beta}(y),
\end{equation}
where the coefficients $a_{\alpha ,\beta}$ satisfies
\begin{equation}\label{coeffsest}
\sup _{\alpha ,\beta} |a_{\alpha ,\beta}e^{r(|\alpha |^{1/2s}+|\beta |^{1/2s})}| <\infty ,
\end{equation}
for some $r>0$.

\par

Now we let $z\in \rr {d_1}$, and
\begin{equation}\label{K1K2def}
\begin{aligned}
K_{0,2}(x,z) &= \sum _{\alpha \in \mathbf N^{d_2}}\sum _{\beta \in \mathbf N^{d_1}}
b_{\alpha ,\beta}h_{d_2,\alpha}(x)h_{d_1,\beta}(z),
\\[1ex]
K_{0,1}(z,y) &= \sum _{\alpha ,\beta \in \mathbf N^{d_1}}
c_{\alpha ,\beta }h_{d_1,\alpha}(z)h_{d_1,\beta}(y),
\end{aligned}
\end{equation}
where
$$
b_{\alpha ,\beta} = a_{\alpha ,\beta} e^{r|\beta|^{1/2s}/2}\quad \text{and}\quad
c_{\alpha ,\beta} = \delta _{\alpha ,\beta}e^{-r|\alpha |^{1/2s}/2}.
$$
Here $\delta _{\alpha ,\beta}$ is the Kronecker delta. Then it follows that
$$
\int K_{0,2}(x,z)K_{0,1}(z,y)\, dz = \sum _{\alpha \in \mathbf N^{d_2}}\sum _{\beta
\in \mathbf N^{d_1}} a_{\alpha ,\beta}h_{d_2,\alpha}(x)h_{d_1,\beta}(y)
= K(x,y).
$$
Hence, if $T_j$ is the operator with kernel $K_{0,j}$, $j=1,2$, then $T=T_2\circ T_1$.
Furthermore,
$$
\sup _{\alpha ,\beta} |b_{\alpha ,\beta}e^{r(|\alpha |^{1/2s}+|\beta |^{1/2s})/2}|
\le \sup _{\alpha ,\beta} |a_{\alpha ,\beta}e^{r(|\alpha |^{1/2s}+|\beta |^{1/2s})}|
<\infty 
$$
and
$$
\sup _{\alpha ,\beta} |c_{\alpha ,\beta}e^{r(|\alpha |^{1/2s}+|\beta |^{1/2s}/2}|
= \sup _{\alpha } |e^{-r|\alpha |^{1/2s}/2}e^{r|\alpha |^{1/2s}/2}|
<\infty .
$$
This implies that $K_{0,1}\in \maclS _s(\rr {d_1+d_1})$ and
$K_{0,2}\in \maclS _s(\rr {d_2+d_1})$ in view of Proposition \ref{stftGS2},
and (2) follows with $K_1=K_{0,1}$ and $K_2=K_{0,2}$, in the case
$d_0=d_1$.

\par

In order to prove (3), we assume that $K\in \Sigma _s(\rr {d_2+d_1})$, and we
let $a_{\alpha ,\beta}$ be the same as the above. Then
\eqref{coeffsest} holds for any $r>0$, which implies that if $N\ge 0$ is an
integer, then
\begin{equation}\label{ThetaNset}
\Theta _N \equiv \sup \sets {|\beta |}{|a_{\alpha ,\beta}|\ge
e^{-2(N+1)(|\alpha |^{1/2s}+|\beta |^{1/2s})}\ \text{for some}\ \alpha
\in \mathbf N^{d_2}}
\end{equation}
is finite.

\par

We let
\begin{align*}
I_1 &= \sets {\beta \in \mathbf N^{d_1}}{|\beta |\le \Theta _1+1}
\intertext{and define inductively}
I_j &= \sets {\beta \in \mathbf N^{d_1}\setminus I_{j-1}}{|\beta |\le \Theta _j+j},
\quad j\ge 2.
\end{align*}
Then
$$
I_j\cap I_k=\emptyset \quad \text{when}\quad j\neq k,\quad \text{and}\quad
\bigcup _{j\ge 0} I_j=\mathbf N^{d_1}.
$$

\par

We also let $K_{0,1}$ and $K_{0,2}$ be given by \eqref{K1K2def}, where
$$
b_{\alpha _2,\beta} = a_{\alpha _2,\beta}e^{j|\beta |^{1/2s}}\quad \text{and}\quad
c_{\alpha _1,\beta} = \delta _{\alpha _1,\beta}e^{-j|\beta |^{1/2s}},
$$
when $\alpha _1\in \mathbf N^{d_1}$, $\alpha _2\in \mathbf N^{d_2}$
and $\beta \in I_j$. If $T_j$ is the operator with kernel $K_{0,j}$ for $j=1,2$, then it follows
that $T_2\circ T_1 =T$. Furthermore, if $r>0$, then we have
$$
\sup _{\alpha ,\beta } |b_{\alpha ,\beta} e^{r (|\alpha |^{1/2s}+|\beta |^{1/2s})}|
\le J_1+J_2,
$$
where
\begin{align}
J_1 &= \sup _{j\le r+1}\sup _\alpha \sup _{\beta \in I_j}|b_{\alpha ,\beta}
e^{r (|\alpha |^{1/2s}+|\beta |^{1/2s})}|\label{J1def}
\intertext{and}
J_2 &= \sup _{j> r+1}\sup _\alpha \sup _{\beta \in I_j}|b_{\alpha ,\beta}
e^{r (|\alpha |^{1/2s}+|\beta |^{1/2s})}|\label{J2def}
\end{align}

\par

Since only finite numbers of $\beta$ is involved in the suprema in \eqref{J1def},
it follows from \eqref{coeffsest} and the definition of $b_{\alpha ,\beta}$ that $J_1$
is finite.

\par

For $J_2$ we have
\begin{multline*}
J_2 = \sup _{j> r+1}\sup _\alpha \sup _{\beta \in I_j}|a_{\alpha ,\beta}
e^{r|\alpha |^{1/2s}+(r+j)|\beta |^{1/2s})}|
\\[1ex]
\le \sup _{j> r+1}\sup _\alpha \sup _{\beta \in I_j}|e^{-2j(|\alpha |^{1/2s}+|\beta|^{1/2s})}
e^{r|\alpha |^{1/2s}+(r+j)|\beta |^{1/2s})}| <\infty ,
\end{multline*}
where the first inequality follows from \eqref{ThetaNset}. Hence
$$
\sup _{\alpha ,\beta } |b_{\alpha ,\beta} e^{r (|\alpha |^{1/2s}+|\beta |^{1/2s})}|< \infty ,
$$
which implies that $K_{0,2}\in \Sigma _s(\rr {d_2+d_1})$.

\par

If we now replace $b_{\alpha ,\beta}$ with $c_{\alpha ,\beta}$ in the definition
of $J_1$ and $J_2$, it follows by similar arguments that both $J_1$ and $J_2$
are finite, also in this case. This gives
$$
\sup _{\alpha ,\beta } |c_{\alpha ,\beta} e^{r (|\alpha |^{1/2s}+|\beta |^{1/2s})}|< \infty .
$$
Hence $K_1\in \Sigma _s(\rr {d_1+d_1})$, and (3) follows in the case $d_0=d_1$.

\par

Next assume that $d_0>d_1$, and let $d=d_0-d_1\ge 1$. Then we set
$$
K_1(z,y) = K_{0,1}(z_1,y)h_{d,0}(z_2)\quad \text{and}\quad
K_2(x,z) = K_{0,2}(x,z_1)h_{d,0}(z_2),
$$
where $K_{0,j}$ are the same as in the first part of the proofs, $z_1\in \rr {d_1}$
and $z_2\in \rr d$, giving that $z=(z_1,z_2)\in \rr {d_0}$. We get
$$
\int _{\rr {d_0}}K_2(x,z)K_1(z,y)\, dz = \int _{\rr {d_1}}K_{0,2}(x,z_1)K_{0,1}(z_1,y)\,
dz_1 = K(x,y).
$$
The assertions (2) now follows in the case $d_0>d_1$ from the equivalences
\begin{alignat*}{3}
K_1 &\in \maclS _s(\rr {d_0+d_1})&\quad &\Longleftrightarrow & \quad
K_{0,1} &\in \maclS _s(\rr {d_1+d_1})
\intertext{and}
K_2 &\in \maclS _s(\rr {d_2+d_0})&\quad &\Longleftrightarrow & \quad
K_{0,2} &\in \maclS _s(\rr {d_2+d_1}),
\end{alignat*}
Since the same equivalences hold after the $\maclS _s$ spaces have been replaced
by $\Sigma _s$ spaces, the assertion (3) also follows in the case $d_0>d_1$, and the
theorem follows in the case $d_0\ge d_1$.

\par

It remains to prove the result in the case $d_0\ge d_2$. The rules of $d_1$ and
$d_2$ are interchanged when taking the adjoints. Hence, the result follows
from the first part of the proof in combination with the facts that
$\maclS _s$ and $\Sigma _s$ are invariant under pullbacks of bijective linear
transformations. The proof is complete.
\end{proof}

\par

\begin{rem}
From the construction of $K_1$ and $K_2$ in the proof of Theorem \ref{GSkernels}, it follows
that it is not so complicated for using numerical methods when
obtaining approximations of candidates to $K_1$ and $K_2$. In fact, $K_1$ and $K_2$ are formed
explicitly by the elements of the matrix for $T$, when the Hermite functions are used as
basis for $\mathscr S$, $\maclS _s$ and $\Sigma _s$.
\end{rem}

\par

The following result is an immediate consequence of Theorem \ref{GSkernels} and
the fact that the map $a\mapsto K_{a,t}$ is continuous and bijective on $\maclS
_{s_1}(\rr {2d})$, and on $\Sigma _{s_2}(\rr {2d})$, for every $s_1\ge 1/2$, $s_2>1/2$
and $t\in \mathbf R$.

\par

\begin{thm}\label{GSpseudo}
Let $t\in \mathbf R$, $s_1\ge 1/2$ and $s_2>1/2$. Then the following is true:
\begin{enumerate}
\item if $a\in \maclS _{s_1}(\rr {2d})$, then there are $a_1,a_2\in \maclS _{s_1}(\rr {2d})$
such that $a=a_1\wpr _ta_2$;

\vrum

\item  if $a\in \Sigma _{s_2}(\rr {2d})$, then there are $a_1,a_2\in \maclS _{s_2}(\rr {2d})$
such that $a=a_1\wpr _ta_2$.
\end{enumerate}
\end{thm}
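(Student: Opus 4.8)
The plan is to transfer the kernel factorization of Theorem \ref{GSkernels} through the symbol-to-kernel correspondence $a\mapsto K_{a,t}$. Throughout, fix $t\in\mathbf R$, and recall from the discussion preceding the statement that $a\mapsto K_{a,t}$ is a homeomorphism of $\maclS _{s_1}(\rr{2d})$ onto itself, and likewise of $\Sigma _{s_2}(\rr{2d})$ onto itself. I would carry out case (1) in detail; case (2) is verbatim, with $\maclS _{s_1}$ replaced by $\Sigma _{s_2}$ and part (2) of Theorem \ref{GSkernels} replaced by part (3).

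First I would set $K=K_{a,t}$. Since $a\in\maclS _{s_1}(\rr{2d})$ and $a\mapsto K_{a,t}$ preserves this space, we have $K\in\maclS _{s_1}(\rr{2d})=\maclS _{s_1}(\rr{d_2+d_1})$ with $d_1=d_2=d$, and $T=\op _t(a)$ is precisely the operator with kernel $K$, continuous from $\maclS _{1/2}(\rr d)$ to $\maclS _{1/2}'(\rr d)$.

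Next I would invoke Theorem \ref{GSkernels}(2) for $T$ with $d_0=d$, which is admissible since $d_0\ge\min(d_1,d_2)=d$. This yields operators $T_1,T_2$ with kernels $K_1,K_2\in\maclS _{s_1}(\rr{2d})$ and $T=T_2\circ T_1$. By the Remark following Theorem \ref{GSkernels}, $T_1$ and $T_2$ are regularizing, so $T_2\circ T_1$ is a well-defined continuous operator from $\maclS _{1/2}(\rr d)$ to $\maclS _{1/2}'(\rr d)$. Using surjectivity of $a\mapsto K_{a,t}$ on $\maclS _{s_1}(\rr{2d})$, I would choose $b_1,b_2\in\maclS _{s_1}(\rr{2d})$ with $K_{b_j,t}=K_j$, so that $T_j=\op _t(b_j)$ by uniqueness of kernels. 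Then
$$
\op _t(a)=T=T_2\circ T_1=\op _t(b_2)\circ\op _t(b_1),
$$
whence the right-hand side of the definition of $\wpr _t$ makes sense, giving $a=b_2\wpr _t b_1$. Relabelling $a_1=b_2$ and $a_2=b_1$ yields $a=a_1\wpr _t a_2$ with $a_1,a_2\in\maclS _{s_1}(\rr{2d})$, which is (1); in case (2) the same reasoning delivers $a_1,a_2\in\Sigma _{s_2}(\rr{2d})\hookrightarrow\maclS _{s_2}(\rr{2d})$.

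The argument is little more than bookkeeping once Theorem \ref{GSkernels} is available; the one point meriting care is the homeomorphism property of $a\mapsto K_{a,t}$ on the Gelfand-Shilov scale, which rests on $\mathscr F_2^{-1}$ and the linear change of variables $(x,y)\mapsto((1-t)x+ty,x-y)$ being homeomorphisms of these spaces. That is the step I would check most carefully, although it is already recorded in the text immediately before the statement.
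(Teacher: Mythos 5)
Your argument is correct and is precisely the paper's intended proof: the paper gives no written proof of Theorem \ref{GSpseudo}, stating only that it is an immediate consequence of Theorem \ref{GSkernels} together with the fact that $a\mapsto K_{a,t}$ is continuous and bijective on $\maclS _{s_1}(\rr {2d})$ and on $\Sigma _{s_2}(\rr {2d})$, which is exactly the transfer you carry out. Your closing observation that case (2) actually produces $a_1,a_2\in \Sigma _{s_2}(\rr {2d})\hookrightarrow \maclS _{s_2}(\rr {2d})$ is a nice touch, since it reconciles your (stronger) conclusion with the statement as printed.
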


\par

%
%

\par

\section{Schatten-von Neumann properties for operators with
Gelfand-Shilov kernels}\label{sec3}

\par

In this section we use Theorem \ref{GSkernels} to prove that if $T$ is a
linear operator with kernel in
$\maclS _s$, and $\mascB _1,\mascB _2\subseteq \maclS _s'$ are such that
$\maclS _s\subseteq \mascB _1,\mascB _2$, then $T$ belongs to any
Schatten-von Neumann class of operators between $\mascB _1$
and $\mascB _2$. In particular it follows that
the singular values of $T$ fulfill strong decay properties.

\par

We start by recalling the definition of Schatten-von Neumann operators
in the (quasi-)Banach space case. Let $\mascB$ be a vector space. A
\emph{quasi-norm} $\nm \cdo {\mascB}$ on $\mascB$ is a non-negative and
real-valued function on $\mascB$ which is non-degenerate in the sense
$$
\nm {f}{\mascB}=0\qquad \Longleftrightarrow \qquad f=0,\quad f\in \mascB,
$$
and fulfills
\begin{equation}\notag
\begin{alignedat}{2}
\nm {\alpha f}{\mascB} &= |\alpha |\cdot \nm f{\mascB}, &\qquad 
f &\in \mascB ,\ \alpha \in \mathbf C
\\[1ex]
\text{and}\qquad \nm {f+g}{\mascB} &\le D(\nm f{\mascB} +\nm g{\mascB}),
&\qquad f,g &\in \mascB , 
\end{alignedat}
\end{equation}
for some constant $D\ge 1$ which is independent of $f,g\in \mascB$. Then
$\mascB$ is a topological vector space when the topology for $\mascB$
is defined by $\nm \cdo{\mascB}$, and $\mascB$ is called a quasi-Banach
space if $\mascB$ is complete under this topology.

\par

Let $\mascB _1$ and $\mascB _2$ be (quasi-)Banach spaces, and let
$T$ be a linear map between $\mascB _1$ and $\mascB _2$. For
every integer $j\ge 1$, the \emph{singular values} of order $j$ of
$T$ is given by
$$
\sigma _j(T) = \sigma _j(\mascB _1,\mascB _2,T)
\equiv \inf \nm {T-T_0}{\mascB _1\to \mascB _2},
$$
where the infimum is taken over all linear operators $T_0$ from $\mascB _1$
to $\mascB _2$ with rank at most $j-1$. Therefore, $\sigma _1(T)$
equals $\nm T{\mascB _1\to \mascB _2}$, and $\sigma _j(T)$ are 
non-negative which decreases with $j$.

\par

For any $p\in (0,\infty ]$ we set
$$
\nm T{\mascI _p} = \nm T{\mascI _p(\mascB _1,\mascB _2)}
\equiv \nm { ( \sigma _j(\mascB _1,\mascB _2,T) ) _{j=1}^\infty}{l^p}
$$
(which might attain $+\infty$). The operator $T$ is called a \emph{Schatten-von
Neumann operator} of order $p$ from $\mascB _1$ to $\mascB _2$, if
$\nm T{\mascI _p}$ is finite, i.{\,}e.
$( \sigma _j(\mascB _1,\mascB _2,T) ) _{j=1}^\infty$ should belong to $l^p$.
The set of all Schatten-von Neumann operators of order $p$ from
$\mascB _1$ to $\mascB _2$ is denoted by $\mascI _p =
\mascI _p(\mascB _1,\mascB _2)$. We note that
$\mascI _\infty(\mascB _1,\mascB _2)$ agrees with $\maclB (\mascB _1
,\mascB _2)$, the set of linear and bounded operators
from $\mascB _1$ to $\mascB _2$, and if $p<\infty$, then
$\mascI _p(\mascB _1,\mascB _2)$ is contained in $\maclK(\mascB _1
,\mascB _2)$, the set of linear and compact operators from $\mascB _1$
to $\mascB _2$. If $\mascB _1=\mascB _2$,
then the shorter notation $\mascI  _p(\mascB _1)$ is used
instead of $\mascI  _p(\mascB _1,\mascB _2)$, and similarly
for $\maclB(\mascB _1,\mascB _2)$ and $\maclK(\mascB _1,
\mascB _2)$.

\par

Schatten-von Neumann classes posses several convenient properties.
For example, if $\mascB _1$, $\mascB _2$ and $\mascB _3$ are Banach
spaces, $p_1,p_2,r\in (0,\infty ]$ satisfy the
H{\"o}lder condition $1/p_1+1/p_2=1/r$,
and $T_k\in \mascI _{p_k}(\mascB _k,\mascB _{k+1})$, then
$T_2\circ T_1\in \mascI _{r}(\mascB _1,\mascB _3)$, and
$$
\nm {T_2\circ T_1}{\mascI _{r}(\mascB _1,\mascB _3)}\le C_r
\nm {T_1}{\mascI _{p_1}(\mascB _1,\mascB _2)}
\nm {T_2}{\mascI _{p_2}(\mascB _2,\mascB _3)},
$$
where $C_r=1$ when $\mascB _j$, $j=1,2,3$, are Hilbert spaces, and $C_r=2^{1/r}$
otherwise (cf. e.{\,}g. \cite{To12, Pie}). We refer to \cite{Si, Pie} for a brief analysis of
Schatten-von Neumann operators.

\par

The following theorem, which is the main result in this section, concerns
Schatten-von Neumann properties for an operator $T_K$
when the operator kernel $K$ belongs to  Gelfand-Shilov spaces.

\par

\begin{thm}\label{thmSchattGSkernels}
Let $\mascB _1$ and $\mascB _2$ be quasi-Banach spaces such that
$$
\maclS _{1/2}(\rr {d_j})\hookrightarrow \mascB _j \hookrightarrow
\maclS _{1/2}'(\rr {d_j}),\quad j=1,2,
$$
and let $p\in (0,\infty ]$. Then the following is true:
\begin{enumerate}
\item if $s\ge 1/2$, $\mascB _1\hookrightarrow \maclS _s'(\rr {d_1})$,
$\maclS _s(\rr {d_2})\hookrightarrow \mascB _2$, and
$K\in \maclS _s(\rr {d_2+d_1})$, then $T_K\in \mascI _p(\mascB _1,\mascB _2)$;

\vrum

\item if $s> 1/2$, $\mascB _1\hookrightarrow \Sigma _s'(\rr {d_1})$,
$\Sigma _s(\rr {d_2})\hookrightarrow \mascB _2$, and
$K\in \Sigma _s(\rr {d_2+d_1})$, then $T_K\in \mascI _p(\mascB _1,\mascB _2)$;

\vrum

\item if $\mascB _1\hookrightarrow \mascS '(\rr {d_1})$,
$\mascS (\rr {d_2})\hookrightarrow \mascB _2$, and
$K\in \mascS (\rr {d_2+d_1})$, then $T_K\in \mascI _p(\mascB _1,\mascB _2)$.
\end{enumerate}
\end{thm}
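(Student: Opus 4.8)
The plan is to combine the decomposition furnished by Theorem \ref{GSkernels} with the H\"older-type composition rule for Schatten--von Neumann classes, the whole matter being reduced to showing that the singular values of $T_K$ decay faster than any power of $j$. I would treat the three cases at once: the only difference is whether the Hermite coefficients are governed by the exponential weights $e^{t|\alpha |^{1/2s}}$ (cases (1) and (2)) or by the polynomial weights $\eabs\alpha ^t$ (case (3)), together with the quantifier ``for some $t$'' versus ``for every $t$'', exactly as recorded in Proposition \ref{stftGS2}.

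First I would take $d_0=d_1$ in Theorem \ref{GSkernels} and write $T_K=T_2\circ T_1$, where $T_1$ is the positive Hermite diagonal operator on $\rr{d_1}$ with eigenvalues $\lambda _\alpha =e^{-r|\alpha |^{1/2s}/2}$ and kernel in $\maclS _s(\rr{2d_1})$, while $T_2$ has kernel in $\maclS _s(\rr{d_2+d_1})$. Using $L^2(\rr{d_1})$ as intermediate space, $T_2$ factors as $L^2(\rr{d_1})\to \maclS _s(\rr{d_2})\hookrightarrow \mascB _2$, since an operator with kernel in $\maclS _s$ is regularizing; hence $T_2\in \maclB (L^2(\rr{d_1}),\mascB _2)=\mascI _\infty (L^2(\rr{d_1}),\mascB _2)$. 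The composition rule with $1/p+1/\infty =1/p$ then reduces everything to the single claim $T_1\in \mascI _p(\mascB _1,L^2(\rr{d_1}))$ for every $p\in (0,\infty ]$.

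To prove this claim I would estimate $\sigma _j(\mascB _1,L^2(\rr{d_1}),T_1)$ by truncating the diagonal. Writing $T_1f=\sum _\alpha \lambda _\alpha f_\alpha h_{d_1,\alpha}$ with $f_\alpha$ the Hermite coefficients of $f$, set $T_1^{(N)}f=\sum _{|\alpha |\le N}\lambda _\alpha f_\alpha h_{d_1,\alpha}$, an operator of rank at most $R_N=\#\{ \alpha \in \mathbf N^{d_1}\, :\, |\alpha |\le N\} \le CN^{d_1}$. The hypothesis $\mascB _1\hookrightarrow \maclS _s'(\rr{d_1})$ translates, through Proposition \ref{stftGS2} and continuity of the embedding, into the uniform bound $|f_\alpha |\le C_\ep \nm f{\mascB _1}e^{\ep |\alpha |^{1/2s}}$, valid for every $\ep >0$. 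Choosing $\ep <r/2$ gives
\begin{multline*}
\nm {(T_1-T_1^{(N)})f}{L^2}^2=\sum _{|\alpha |>N}\lambda _\alpha ^2|f_\alpha |^2 \\
\le C_\ep ^2\nm f{\mascB _1}^2\sum _{|\alpha |>N}e^{-(r-2\ep )|\alpha |^{1/2s}}
\le C\nm f{\mascB _1}^2e^{-cN^{1/2s}},
\end{multline*}
so that $\sigma _{R_N+1}(T_1)\le \nm {T_1-T_1^{(N)}}{\mascB _1\to L^2}\le C'e^{-cN^{1/2s}/2}$. Since $R_N\lesssim N^{d_1}$, the singular values of $T_1$ decay faster than any power of $j$, whence $T_1\in \mascI _p(\mascB _1,L^2(\rr{d_1}))$ for every $p>0$. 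The remaining cases $d_0>d_1$ and $d_0\ge d_2$ follow as in Theorem \ref{GSkernels}, by tensoring with the ground-state Hermite function $h_{d,0}$ and passing to adjoints, and the $\Sigma _s$ and $\mascS$ cases are identical after the corresponding change of weights.

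The main obstacle I anticipate is two-fold. First, one must turn the topological embeddings $\mascB _1\hookrightarrow \maclS _s'$ and $\maclS _s\hookrightarrow \mascB _2$ into the quantitative, uniform coefficient estimates used above; here Proposition \ref{stftGS2} is indispensable, and some care is needed so that the projective and inductive limit structures yield a single admissible weight rather than merely a family of pointwise bounds. Second, the composition rule is stated in the excerpt only for Banach spaces, whereas $\mascB _1$ and $\mascB _2$ may be quasi-Banach; I would either invoke its quasi-Banach analogue, which follows from subadditivity of the rank and the quasi-triangle inequality at the cost of a harmless constant, or, to remain self-contained, bypass the composition rule entirely by running the truncation estimate directly on $T_K$. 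In that variant one truncates the kernel $K$ itself, and the same tail computation, now fed by the decay $|a_{\alpha ,\beta}|\le Ce^{-r(|\alpha |^{1/2s}+|\beta |^{1/2s})}$ of \eqref{coeffsest} together with an estimate $\nm g{\mascB _2}\le C\sup _\alpha |g_\alpha |e^{t_2|\alpha |^{1/2s}}$ with $t_2<r$ coming from $\maclS _s\hookrightarrow \mascB _2$, shows directly that $\sigma _j(\mascB _1,\mascB _2,T_K)$ decays faster than any power of $j$, which is exactly the conclusion $T_K\in \mascI _p(\mascB _1,\mascB _2)$ for all $p$.
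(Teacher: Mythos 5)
Your argument is sound, and it reaches the conclusion by a genuinely different route than the paper. The paper's proof first reduces to the case where $\mascB _1$ and $\mascB _2$ are Hilbert spaces of Hermite type (Lemma \ref{lemSchattenembs} together with Proposition \ref{propA9}), then applies Theorem \ref{GSkernels} \emph{repeatedly} to factor $T_K=T_{K_N}\circ \cdots \circ T_{K_1}$ with every $K_j\in \maclS _s$, notes that each factor lies in $\mascI _2$ by Proposition \ref{propA6}, and finishes with H{\"o}lder's inequality for Schatten classes: $T_K\in \mascI _{2/N}\subseteq \mascI _p$ as soon as $N\ge 2/p$. You instead use a \emph{single} factorization $T_K=T_2\circ T_1$ and obtain membership in every $\mascI _p$ from the sub-exponential decay of the eigenvalues of the diagonal factor, via rank truncation. (You take the eigenvalues $e^{-r|\alpha |^{1/2s}/2}$ from the paper's construction, but you do not need to: any Hermite diagonal operator with kernel in $\maclS _s(\rr {2d_1})$ has eigenvalues $O(e^{-r|\alpha |^{1/2s}})$ for some $r>0$, by Proposition \ref{stftGS2} applied to its kernel.) Your route yields strictly more than the statement asks, namely an explicit rate $\sigma _j(T_K)\lesssim e^{-cj^{1/(2sd_1)}}$; and your worry about H{\"o}lder's inequality in the quasi-Banach setting dissolves, because the only composition fact you use is $\sigma _j(\mascB _1,\mascB _2,T_2\circ T_1)\le \nm {T_2}{L^2\to \mascB _2}\, \sigma _j(\mascB _1,L^2,T_1)$, which follows from the definition of singular values (compose a finite-rank approximant of $T_1$ with $T_2$) and needs no triangle inequality at all. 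The paper's route, conversely, needs no quantitative estimates whatsoever: all the analysis is packaged into the results imported from \cite{To12}.

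The one genuine soft spot is the step you flag yourself: the uniform bound $|f_\alpha |\le C_\ep \nm f{\mascB _1}e^{\ep |\alpha |^{1/2s}}$. Proposition \ref{stftGS2} as stated is purely qualitative --- a membership criterion carrying no norm estimates --- so this bound does not follow from it ``through continuity of the embedding'' alone. Two standard ways to supply it: (i) invoke the quantitative content behind the proof of Proposition \ref{stftGS2}, i.e.\ the growth bounds for $\nm {h_\alpha}{\maclS _{s,h}}$ underlying formula (2.12) in \cite{GrPiRo}; or (ii) argue functional-analytically: in case (1) the space $\maclS _s'$ is a countable projective limit of Banach spaces, hence Fr{\'e}chet, so the linear map $f\mapsto \{ f_\alpha e^{-\ep |\alpha |^{1/2s}}\} _\alpha$ into $l^\infty$ is continuous by the closed graph theorem, and composing with $\mascB _1\hookrightarrow \maclS _s'$ gives the estimate; in cases (2) and (3) the targets $\Sigma _s'$ and $\mascS '$ are strong duals of Fr{\'e}chet spaces, so the image of the unit ball of $\mascB _1$ is bounded, hence equicontinuous by Banach--Steinhaus, which produces a single admissible weight. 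Note that first reducing to Hermite-type Hilbert spaces via the paper's Proposition \ref{propA9} would not spare you this work, since lower bounds for $\nm {h_\alpha}{\mascH _1}$ rest on the same Hermite-norm estimates. With that step supplied, your proof is complete.
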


\par


We need some preparations for the proof. First we note that if
$\mascB _j,\mascC _j$, $j=1,2$, are quasi-Banach spaces and
$T\, :\, \mascB _1\to \mascB _2$, then
\begin{equation}\label{normests}
\nm T{\mascC _1\to \mascC _2} \le C\nm T{\mascB _1\to \mascB _2},
\quad \text{when}\quad \mascC _1 \hookrightarrow \mascB _1\quad
\text{and}\quad  \mascB _2
\hookrightarrow \mascC _2,
\end{equation}
for some constant $C$. Here and in what follows we use the convention that if $T$
is a linear operator from $\mascB _1$ to $\mascB _2$, $\mascC _1\subseteq \mascB _1$
and $\mascB _2 \subseteq \mascC _2$, then the restriction of $T$ to an operator from
$\mascC _1$ to $\mascC _2$ is still denoted by $T$.

\par

\begin{lemma}\label{lemSchattenembs}
Let $\mascB _k,\mascC _k$, $k=1,2$, be quasi-Banach spaces 
such that $\mascC _1 \hookrightarrow \mascB _1$ and $\mascB _2
\hookrightarrow \mascC _2$. Also let $p\in (0,\infty ]$ and $T\, :\, \mascB _1\to
\mascB _2$ be linear and continuous. Then
\begin{equation}\label{singvalineq}
\begin{aligned}
\sigma _j(\mascC _1,\mascC _2,T) &\le C\sigma _j(\mascB _1,\mascB _2,T), \qquad j\ge 1,
\\[1ex]
\text{and}\quad
\nm T{\mascI _p(\mascC _1,\mascC _2)} &\le C\nm T{\mascI _p(\mascB _1,\mascB _2)},
\end{aligned}
\end{equation}
where $C$ is the same constant as in \eqref{normests}.
\end{lemma}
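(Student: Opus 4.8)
The plan is to prove Lemma \ref{lemSchattenembs} directly from the definition of singular values, since both inequalities in \eqref{singvalineq} will follow once the first one is established. First I would fix an integer $j\ge 1$ and unwind the definition $\sigma _j(\mascC _1,\mascC _2,T)=\inf \nm {T-T_0}{\mascC _1\to \mascC _2}$, where the infimum runs over all operators $T_0$ from $\mascC _1$ to $\mascC _2$ of rank at most $j-1$. The key observation is that the \emph{same} family of competitors $T_0$ that is used to approximate $T$ as an operator $\mascB _1\to \mascB _2$ can be reused here: given any linear $T_0\, :\, \mascB _1\to \mascB _2$ with $\rank T_0\le j-1$, its restriction to an operator $\mascC _1\to \mascC _2$ (denoted again by $T_0$ under the stated convention) still has rank at most $j-1$, because restricting the domain and enlarging the codomain can only decrease the rank of the image.

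Next I would apply the operator-norm comparison \eqref{normests} to the single operator $T-T_0$ in place of $T$. Since $\mascC _1\hookrightarrow \mascB _1$ and $\mascB _2\hookrightarrow \mascC _2$ by hypothesis, \eqref{normests} yields
\begin{equation*}
\nm {T-T_0}{\mascC _1\to \mascC _2}\le C\nm {T-T_0}{\mascB _1\to \mascB _2},
\end{equation*}
with the same constant $C$. Taking the infimum over all admissible $T_0$ of rank at most $j-1$ on the right-hand side produces $C\sigma _j(\mascB _1,\mascB _2,T)$. On the left-hand side the infimum over this (possibly smaller) family of competitors is at least as large as the infimum over \emph{all} rank-$(j-1)$ operators $\mascC _1\to \mascC _2$, which is exactly $\sigma _j(\mascC _1,\mascC _2,T)$. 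Chaining these gives the first line of \eqref{singvalineq}.

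For the second inequality I would simply feed the pointwise-in-$j$ bound into the $l^p$ quasi-norm defining $\nm \cdo{\mascI _p}$. Because $\sigma _j(\mascC _1,\mascC _2,T)\le C\sigma _j(\mascB _1,\mascB _2,T)$ holds termwise with a fixed constant $C$ independent of $j$, monotonicity and homogeneity of $\nm \cdo{l^p}$ give
\begin{equation*}
\nm T{\mascI _p(\mascC _1,\mascC _2)}=\nm {(\sigma _j(\mascC _1,\mascC _2,T))_j}{l^p}\le C\nm {(\sigma _j(\mascB _1,\mascB _2,T))_j}{l^p}=C\nm T{\mascI _p(\mascB _1,\mascB _2)},
\end{equation*}
which is the claim, and this step is valid for every $p\in (0,\infty]$ including $p=\infty$.

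The only genuinely delicate point, which I would be careful to state explicitly rather than grind through, is the rank bookkeeping in the first step: one must check that an operator of rank at most $j-1$ between the larger spaces restricts to an operator of rank at most $j-1$ between the smaller/larger spaces, so that the competitor sets are nested in the correct direction. This is immediate since the range of the restriction is a subset of the original range, but it is exactly the hinge on which the direction of the inequality turns, so I would not skip it. Everything else is a routine application of \eqref{normests} together with the order-preserving properties of the $l^p$ quasi-norm.
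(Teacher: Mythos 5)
Your proof is correct and takes essentially the same route as the paper's: you compare the competitor sets by restricting rank-$(j-1)$ operators from $\mascB _1\to \mascB _2$ to $\mascC _1\to \mascC _2$ (the paper formalizes this with sets $\Omega _j$, $\Omega _{1,j}$, $\Omega _{2,j}$), apply \eqref{normests} to $T-T_0$, take infima, and deduce the $\mascI _p$ inequality termwise from the singular-value inequality. No gaps; the rank and nesting bookkeeping you highlight is exactly the hinge the paper's argument also turns on.
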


\par

\begin{proof}
It suffices to prove the first inequality in \eqref{singvalineq}. Let
\begin{align*}
\Omega _{j} &= \sets {T_0\in \maclB (\mascB _1,\mascB _2)}{\rank T_0<j},
\\[1ex]
\Omega _{1,j} &= \sets {T_0\in \maclB (\mascC _1,\mascC _2)}{\rank T_0<j},
\end{align*}
and let $\Omega _{2,j}$ be the set of all $T_0$ in $\Omega _{1,j}$ such that $T_0$
is a restriction of an element in $\Omega _{j}$.
Then $\Omega _{2,j}\subseteq \Omega _{1,j}$, and the restrictions of the elements in
$\Omega _j$ to $\mascC _1$ belong to $\Omega _{2,j}$. This gives
\begin{multline*}
\sigma _j(\mascC _1,\mascC _2,T)
=
\inf _{T_0\in \Omega _{1,j}}\nm {T-T_0}{\mascC _1\to \mascC _2}
\le
\inf _{T_0\in \Omega _{2,j}}\nm {T-T_0}{\mascC _1\to \mascC _2}
\\[1ex]
=
\inf _{T_0\in \Omega _{j}}\nm {T-T_0}{\mascC _1\to \mascC _2}
\le
C\inf _{T_0\in \Omega _{j}}\nm {T-T_0}{\mascB _1\to \mascB _2}
=
C\sigma _j(\mascB _1,\mascB _2,T),
\end{multline*}
where the last inequality follows from \eqref{normests}. Hence \eqref{singvalineq}
follows, and the proof is complete.
%
%
\end{proof}
%
%
%
%
%
%
%
%
%
%
%
%
%
%
%

\par

Before stating the next results we need some notions from \cite{To12} concerning Hilbert
spaces $\mascH$, which satisfy
$$
\maclS _{1/2}(\rr d)\hookrightarrow \mascH \hookrightarrow \maclS _{1/2}'(\rr d).
$$
We let
$$
(S_\pi f)(x)\equiv f(x_{\pi (1)},\dots ,x_{\pi (d)})\in \mascH
\quad \text{when}\quad f\in \maclS _{1/2}'(\rr d),
$$
when  $\pi$ is a permutation of $\{ 1,\dots ,d\}$. The Hilbert space $\mascH$
is said to be of Hermite type, if $(h_\alpha /\nm {h_\alpha}{\mascH})_{\alpha}$ is
an orthonormal basis for $\mascH$, and
$\nm {S_\pi f}{\mascH} = \nm f{\mascH}$ for every $f\in \mascH$ and every
permutation $\pi$ on $\{ 1,\dots ,d\}$.

\par

The $L^2$-dual $\mascH '$ of $\mascH$ consists of all $f\in \maclS _{1/2}'(\rr d)$
such that
$$
\nm f{\mascH '}\equiv \sup |(f,\fy ) _{L^2}|
$$
is finite. Here the supremum is taken over all $\fy \in \maclS _{1/2}$ such that
$\nm \fy{\mascH}\le 1$.

\par

We also let $\mascH ^{\tau}$ be the set of all  $f\in \maclS _{1/2}'$ such that
$\overline f\in \mascH$. Then $\mascH$ and $\mascH ^\tau$ with norms
$f\mapsto \nm f{\mascH '}$ and $f\mapsto \nm {\overline f}{\mascH}$ respectively,
are Hilbert spaces.

\par

The following two results are immediate consequences of Propositions 3.8
and 4.9 in \cite{To12}. The proofs are therefore omitted. 

\par

\begin{prop}\label{propA6}
Let $\mascH _j$ be Hilbert space of Hermite types on $\rr {d_j}$ for
$j=1,2$, and let $T$ be a linear and continuous map from $\mascH _1$
to $\mascH _2$. Also let $\mascH =\mascH _2\otimes (\mascH _1')^\tau$
(Hilbert tensor product). If $K_T$ is the kernel of $T$, then
$T\in \mascI _2(\mascH _1,\mascH_2)$, if and only if $K_T\in \mascH$, and
\begin{equation}\label{A14}
\nm T{\mascI _2(\mascH _1,\mascH _2)}=\nm {K_T}{\mascH}.
\end{equation}
\end{prop}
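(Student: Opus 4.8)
The plan is to reduce the whole statement to a single weighted $\ell^2$ identity for the Hermite matrix of $T$, and then to evaluate both $\nm{T}{\mascI_2(\mascH_1,\mascH_2)}$ and $\nm{K_T}{\mascH}$ against orthonormal bases built from Hermite functions. Since $\mascH_1$ and $\mascH_2$ are of Hermite type, the normalized functions $e_\beta=h_{d_1,\beta}/\mu_\beta$ (with $\mu_\beta=\nm{h_{d_1,\beta}}{\mascH_1}$) form an orthonormal basis of $\mascH_1$, and $f_\alpha=h_{d_2,\alpha}/\lambda_\alpha$ (with $\lambda_\alpha=\nm{h_{d_2,\alpha}}{\mascH_2}$) form one of $\mascH_2$. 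First I would set $a_{\alpha,\beta}=(Th_{d_1,\beta},h_{d_2,\alpha})_{L^2}$ and use the kernel identity $(Tf,g)_{L^2}=(K_T,g\otimes\overline f)_{L^2}$ together with the reality of the Hermite functions to identify $a_{\alpha,\beta}$ with the $(\alpha,\beta)$ Hermite coefficient of $K_T$ on $\rr{d_2+d_1}$.

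The first computation is the Hilbert--Schmidt norm. Starting from $\nm{T}{\mascI_2}^2=\sum_\beta\nm{Te_\beta}{\mascH_2}^2$ and expanding $Te_\beta$ in the basis $(f_\alpha)$, the Hermite-type scaling gives $(Te_\beta,f_\alpha)_{\mascH_2}=a_{\alpha,\beta}\lambda_\alpha/\mu_\beta$, so that $\nm{T}{\mascI_2}^2=\sum_{\alpha,\beta}|a_{\alpha,\beta}|^2\lambda_\alpha^2\mu_\beta^{-2}$, with the understanding that $T\in\mascI_2$ precisely when this sum converges.

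The second computation is the norm in $\mascH=\mascH_2\otimes(\mascH_1')^\tau$, and here lies the only delicate step: pinning down the orthonormal basis of $(\mascH_1')^\tau$. I would compute the $L^2$-dual norm by a weighted Cauchy--Schwarz argument, writing $\fy=\sum_\gamma\fy_\gamma h_{d_1,\gamma}$ so that $\nm\fy{\mascH_1}^2=\sum_\gamma|\fy_\gamma|^2\mu_\gamma^2$; the supremum defining $\nm{\cdo}{\mascH_1'}$ then yields $\nm{h_{d_1,\beta}}{\mascH_1'}=\mu_\beta^{-1}$, and by reality $\nm{h_{d_1,\beta}}{(\mascH_1')^\tau}=\mu_\beta^{-1}$. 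Thus $(\mu_\beta h_{d_1,\beta})_\beta$ is orthonormal in $(\mascH_1')^\tau$, and $(f_\alpha\otimes\mu_\beta h_{d_1,\beta})_{\alpha,\beta}$ is orthonormal in $\mascH$. Expanding $K_T=\sum_{\alpha,\beta}a_{\alpha,\beta}h_{d_2,\alpha}\otimes h_{d_1,\beta}$ in this basis produces coefficients $a_{\alpha,\beta}\lambda_\alpha/\mu_\beta$, whence $\nm{K_T}{\mascH}^2=\sum_{\alpha,\beta}|a_{\alpha,\beta}|^2\lambda_\alpha^2\mu_\beta^{-2}$.

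Comparing the two expressions gives $\nm{T}{\mascI_2(\mascH_1,\mascH_2)}=\nm{K_T}{\mascH}$, and since they are literally the same weighted sum, one is finite exactly when the other is; this simultaneously yields the equivalence $T\in\mascI_2(\mascH_1,\mascH_2)\Leftrightarrow K_T\in\mascH$ and the norm equality \eqref{A14}. I expect the main obstacle to be purely organizational, namely the correct bookkeeping of the dual and of the conjugation $\tau$ on $\mascH_1$: verifying that passing from $\mascH_1$ to $(\mascH_1')^\tau$ inverts the weights $\mu_\beta$ and places the complex conjugation so as to match the factor $\overline f$ in the kernel identity. Once those reciprocal weights are in place, the identification of the two weighted $\ell^2$ sums is automatic.
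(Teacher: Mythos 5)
Your argument is correct, but it is genuinely different from what the paper does: the paper gives no proof of this proposition at all, stating that it is an immediate consequence of Propositions 3.8 and 4.9 in \cite{To12} and omitting the argument. Your direct Hermite-expansion computation is a sound, self-contained replacement. Both pillars check out: (i) on the operator side, $(Te_\beta ,f_\alpha )_{\mascH _2}=\lambda _\alpha (Te_\beta ,h_{d_2,\alpha})_{L^2}=a_{\alpha ,\beta}\lambda _\alpha /\mu _\beta$, which is justified because the $\mascH _2$-expansion of $Te_\beta$ converges in $\maclS _{1/2}'(\rr {d_2})$ (the embedding $\mascH _2\hookrightarrow \maclS _{1/2}'$ being continuous), so it may be paired termwise against $h_{d_2,\alpha}$; (ii) on the kernel side, the weighted Cauchy--Schwarz computation correctly gives $\nm {h_{d_1,\beta}}{\mascH _1'}=\mu _\beta ^{-1}$, and since Hermite functions are real-valued the conjugation $\tau$ is invisible on the basis, so $(\mu _\beta h_{d_1,\beta})_\beta$ is indeed an orthonormal basis of $(\mascH _1')^\tau$ and the two weighted $\ell ^2$ sums coincide term by term. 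The one ingredient you use silently and ought to cite explicitly is classical but not purely formal: the paper defines $\mascI _2(\mascH _1,\mascH _2)$ through singular values (approximation numbers), so the identity $\nm T{\mascI _2}^2=\sum _\beta \nm {Te_\beta}{\mascH _2}^2$ for an arbitrary orthonormal basis $(e_\beta )$ --- that is, the isometric coincidence of the approximation-number definition with the Hilbert--Schmidt definition on Hilbert spaces --- is a standard fact (cf. \cite{Si, Pie}); once it is invoked, both directions of the equivalence and the equality \eqref{A14} follow exactly as you say. What your route buys is that the paper becomes self-contained at this point; what the paper's citation buys is brevity and the broader framework of \cite{To12}, where Hilbert spaces of Hermite type and their duals and tensor products are treated systematically.
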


\par

\begin{prop}\label{propA9}
Let $s\ge 1/2$ and let $\mascB$ be a quasi-Banach space such that
$$
\maclS _{1/2}(\rr d)\hookrightarrow \mascB \hookrightarrow \maclS _{1/2}'(\rr d)
$$
holds.
Then the following is true: 
\begin{enumerate}
\item if $\maclS _s(\rr d)\hookrightarrow \mascB$, then there is
a Hilbert space $\mascH$ of Hermite type such that
$\maclS _s(\rr d)\hookrightarrow \mascH
\hookrightarrow \mascB$;

\vrum

\item if $\mascB \hookrightarrow \maclS _s'(\rr d)$, then there is
a Hilbert space $\mascH$ of Hermite type such that
$\mascB \hookrightarrow \mascH \hookrightarrow \maclS _s'(\rr d)$.
%
\end{enumerate}

\par

The same conclusions hold after $s\ge 1/2$, $\maclS _s$ and
$\maclS _s '$ are replaced by $s> 1/2$, $\Sigma _s$
and $\Sigma _s'$ respectively, or after $\maclS _s$ and
$\maclS _s '$ are replaced by $\mascS$ and $\mascS '$
respectively.
\end{prop}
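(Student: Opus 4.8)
The plan is to identify the Hermite-type Hilbert spaces explicitly and to realize the Gelfand--Shilov scales as limits of such spaces. If $\mascH$ is of Hermite type and $w(\alpha):=\nm{h_\alpha}{\mascH}$, then expanding $f=\sum_\alpha c_\alpha h_\alpha$ gives $\nm f{\mascH}^2=\sum_\alpha|c_\alpha|^2w(\alpha)^2$, so $\mascH$ is exactly the weighted $\ell^2$-space $\mascH_w$ attached to $w$, and the invariance $\nm{S_\pi f}{\mascH}=\nm f{\mascH}$ is equivalent to $w$ being a symmetric function of the components of $\alpha$; in particular any $w$ depending on $\alpha$ only through $|\alpha|$ is admissible. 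By Proposition~\ref{stftGS2} with $p=2$, the scales split as inductive/projective limits of such spaces: writing $v_t(\alpha)=e^{t|\alpha|^{1/2s}}$, one has $\maclS_s=\bigcup_{t>0}\mascH_{v_t}$ and $\maclS_s'=\bigcap_{t>0}\mascH_{v_{-t}}$ (topologically), with $\Sigma_s,\Sigma_s'$ and $\mascS,\mascS'$ obtained by interchanging ``some $t$'' and ``every $t$'' and, for $\mascS$, replacing $v_t$ by $\eabs\alpha^{t}$. Each embedding hypothesis thereby becomes a family of quantitative inequalities between $\mascB$-quantities and weighted $\ell^2$-norms of Hermite coefficients, and the task is to manufacture a single symmetric weight $w$ sitting between the family and $\mascB$.

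For (1) I would first read off basis growth: since $\maclS_s\hookrightarrow\mascB$ and $\maclS_s=\bigcup_t\mascH_{v_t}$, each $\mascH_{v_t}\hookrightarrow\mascB$ is continuous, so $\nm{h_\alpha}{\mascB}\le C_te^{t|\alpha|^{1/2s}}$ for every $t>0$. Symmetrizing, $M(\alpha):=\max_{|\beta|=|\alpha|}\nm{h_\beta}{\mascB}$ depends only on $|\alpha|$ and obeys $M(\alpha)\le C_\ep e^{\ep|\alpha|^{1/2s}}$ for every $\ep>0$. I then set $w(\alpha)=M(\alpha)\,e^{|\alpha|^{1/2s'}}$ with $s'>s$. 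As $|\alpha|^{1/2s'}=o(|\alpha|^{1/2s})$, we get $w(\alpha)\le C_te^{t|\alpha|^{1/2s}}$ for every $t>0$, giving $\maclS_s\hookrightarrow\mascH_w$. For $\mascH_w\hookrightarrow\mascB$ I would invoke the Aoki--Rolewicz theorem to replace $\nm\cdo{\mascB}$ by an equivalent $p$-subadditive quasi-norm ($0<p\le1$); then Hölder's inequality with exponents $2/p$ and $2/(2-p)$ yields, for partial sums,
\[
\Big\|\sum_\alpha c_\alpha h_\alpha\Big\|_{\mascB}^{p}\le\sum_\alpha|c_\alpha|^p\nm{h_\alpha}{\mascB}^p\le\nm f{\mascH_w}^{p}\Big(\sum_\alpha\big(\nm{h_\alpha}{\mascB}/w(\alpha)\big)^{q}\Big)^{(2-p)/2},\qquad q=\tfrac{2p}{2-p},
\]
and the last series converges because $\nm{h_\alpha}{\mascB}/w(\alpha)\le e^{-|\alpha|^{1/2s'}}$. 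The same bound makes the partial sums Cauchy in $\mascB$, so $\sum_\alpha c_\alpha h_\alpha$ converges there to $f$ and $\mascH_w\hookrightarrow\mascB$. The flanking embeddings $\maclS_{1/2}\hookrightarrow\mascH_w\hookrightarrow\maclS_{1/2}'$ are automatic from $\maclS_{1/2}\hookrightarrow\maclS_s$ and $\mascB\hookrightarrow\maclS_{1/2}'$.

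For (2) the hypothesis $\mascB\hookrightarrow\maclS_s'=\bigcap_{\tau>0}\mascH_{v_{-\tau}}$ furnishes, for each $\tau>0$, a constant $C_\tau\ge1$ with $\sum_\alpha|c_\alpha|^2e^{-2\tau|\alpha|^{1/2s}}\le C_\tau^2\nm f{\mascB}^2$. No single $v_{-\tau}$ can serve, so I would build $w$ diagonally: put $\tau_k=1/k$, $\beta_k=\tfrac12\log(2^kC_{1/k}^2)$, choose $0=N_0<N_1<\cdots$ sparse enough that $\beta_k/N_{k-1}^{1/2s}\le1/k$ for $k\ge2$, and on the shell $A_k=\{\alpha:N_{k-1}\le|\alpha|<N_k\}$ set $\phi(\alpha)=\tfrac1k|\alpha|^{1/2s}+\beta_k$, $w=e^{-\phi}$. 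Then $w$ is symmetric, $\phi=o(|\alpha|^{1/2s})$, and
\[
\sum_\alpha|c_\alpha|^2e^{-2\phi(\alpha)}=\sum_k e^{-2\beta_k}\sum_{\alpha\in A_k}|c_\alpha|^2e^{-2\tau_k|\alpha|^{1/2s}}\le\sum_k 2^{-k}\nm f{\mascB}^2=\nm f{\mascB}^2,
\]
so $\mascB\hookrightarrow\mascH_w$; moreover $\phi=o(|\alpha|^{1/2s})$ gives $w\ge c_\tau v_{-\tau}$ for every $\tau>0$, i.e.\ $\mascH_w\hookrightarrow\maclS_s'\hookrightarrow\maclS_{1/2}'$, while $\phi\ge0$ gives $\maclS_{1/2}\hookrightarrow\mascH_w$.

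The $\Sigma_s$ and $\mascS$ variants are lighter: there the function-side space is a projective limit and its dual an inductive limit, so the binding embedding localizes to a single Hilbert constituent (by domination by one seminorm out of a Fréchet space, respectively factorization of a map into an inductive limit), and one simply takes $w=v_{t_0}$ (resp.\ $\eabs\alpha^{t_0}$) for the single rate produced by the hypothesis. The genuine difficulty is thus concentrated in the $\maclS_s$ case: the inductive scale $\maclS_s=\bigcup_t\mascH_{v_t}$ forces the interpolating weight to be subordinate to \emph{every} $v_t$, so no single Hilbert constituent embeds into $\mascB$ (part (1)) and no single one contains $\mascB$ (part (2)). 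Overcoming this, together with the fact that $\mascB$ is only quasi-Banach---which precludes Hilbert-space interpolation and makes convergence of Hermite expansions delicate---is exactly what the Hölder/Aoki--Rolewicz estimate and the diagonal weight are designed to handle.
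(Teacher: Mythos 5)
Your construction is sound, but note that the paper does not actually prove Proposition \ref{propA9} at all: it is stated as an immediate consequence of Propositions 3.8 and 4.9 in \cite{To12}, with the proof omitted. Your argument is therefore a self-contained substitute, and it is the natural one: you correctly identify Hermite-type spaces with symmetric weighted $\ell^2$-spaces $\mascH_w$ via $w(\alpha)=\nm{h_\alpha}{\mascH}$, use Proposition \ref{stftGS2} with $p=2$ to realize the scales as $\maclS_s=\bigcup_{t>0}\mascH_{v_t}$ and $\maclS_s'=\bigcap_{\tau>0}\mascH_{v_{-\tau}}$, and then the two genuinely nontrivial steps both check out: in (1) the weight $w=M(\alpha)e^{|\alpha|^{1/2s'}}$ with $s'>s$ is subordinate to every $v_t$ (giving $\maclS_s\hookrightarrow\mascH_w$), and the Aoki--Rolewicz $p$-subadditive renorming plus H{\"o}lder with exponents $2/p$, $2/(2-p)$ gives convergence of Hermite partial sums in $\mascB$ with the correct bound, since $\sum_\alpha e^{-q|\alpha|^{1/2s'}}<\infty$; in (2) the shell-diagonal weight $e^{-\phi}$ with $\beta_k=\tfrac12\log(2^kC_{1/k}^2)$ and the sparseness condition $\beta_k/N_{k-1}^{1/2s}\le 1/k$ makes the geometric-series estimate $\sum_k 2^{-k}\nm f{\mascB}^2$ work while keeping $\phi(\alpha)\le \tau|\alpha|^{1/2s}+c_\tau$ for every $\tau>0$, so $\mascH_w\hookrightarrow\maclS_s'$. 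The treatment of the $\Sigma_s$ and $\mascS$ cases by seminorm domination (Fr{\'e}chet side) and Grothendieck-type factorization (inductive-limit side) is also correct, and the $\mascH_w$ you build are genuinely of Hermite type since your weights depend on $\alpha$ only through $|\alpha|$.

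Two caveats. First, your argument silently uses the \emph{topological} sequence-space identifications (so that $\maclS_s\hookrightarrow\mascB$ yields $\nm{h_\alpha}{\mascB}\le C_te^{t|\alpha|^{1/2s}}$ for every $t>0$, and $\mascB\hookrightarrow\maclS_s'$ yields the constants $C_\tau$); Proposition \ref{stftGS2} as stated gives only the set-level characterization together with convergence of the expansions, so you should either cite the known topological isomorphism or patch it with a closed-graph/factorization argument --- all the spaces involved are complete metrizable or webbed, so this is routine but should be said. Second, your closing ``difficulty'' commentary misstates which embedding obstructs a single constituent: in (1) every $\mascH_{v_t}$ \emph{does} embed continuously into $\mascB$ (it is $\maclS_s\subseteq\mascH_{v_t}$ that fails), and in (2) every $\mascH_{v_{-\tau}}$ \emph{does} contain $\mascB$ (it is $\mascH_{v_{-\tau}}\subseteq\maclS_s'$ that fails). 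Your actual constructions target the correct constraints, so this is only an error in the commentary, not in the proof.
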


\par

\begin{proof}[Proof of Theorem \ref{thmSchattGSkernels}]
We only prove (1). The other cases follow by similar arguments and are left
for the reader. By Lemma \ref{lemSchattenembs} and Proposition \ref{propA9}
it follows that we may assume that $\mascB _1$ and $\mascB _2$
are Hilbert spaces of Hermite type.

\par

For every integer $N\ge 1$, it follows by repeated application of
Theorem \ref{GSkernels}, that
$$
T_K=T_{K_N}\circ \cdots \circ T_{K_1},
$$
for some $K_j\in \maclS _s$, $j=1,\dots ,N$. Then Proposition \ref{propA6}
shows that $T_{K_j}\in \mascI _2$, for every
$j=1,\dots ,N$. Hence, if $N\ge 2/p$, then H{\"o}lder's inequality for Schatten-von Neumann
operators give
\begin{equation*}
T_K=T_{K_N}\circ \cdots \circ T_{K_1}\in \mascI _2\circ \cdots \circ \mascI _2
\subseteq \mascI _{2/N}\subseteq \mascI _p,
\end{equation*}
which gives the result. The proof is complete.
\end{proof}

\par

\section*{Acknowledgment}
The authors are thankful to Stevan Pilipovi{\'c} for interesting discussions and valuable advice.

\par


\end{document}